\tikzset{
  on each segment/.style={
    decorate,
    decoration={
      show path construction,
      moveto code={},
      lineto code={
        \path [#1]
        (\tikzinputsegmentfirst) -- (\tikzinputsegmentlast);
      },
      curveto code={
        \path [#1] (\tikzinputsegmentfirst)
        .. controls
        (\tikzinputsegmentsupporta) and (\tikzinputsegmentsupportb)
        ..
        (\tikzinputsegmentlast);
      },
      closepath code={
        \path [#1]
        (\tikzinputsegmentfirst) -- (\tikzinputsegmentlast);
      },
    },
  },
  mid arrow/.style={postaction={decorate,decoration={
        markings,
        mark=at position .5 with {\arrow[#1]{stealth}}
      }}},
}
\numberwithin{equation}{section}
\renewcommand{\tilde}{\widetilde}
\newcommand{\pp}{\mathbb P}
\newcommand{\OO}{\mathcal O}
\newcommand{\Diff}{\mathrm{Diff}}
\newcommand{\I}{\mathrm{i}}
\title[Computing invariants of semi-log-canonical surfaces]{Computing 
 invariants of semi-log-canonical surfaces}
\author{Marco Franciosi}
\address{Marco Franciosi\\Dipartimento di Matematica\\Universit\`a di Pisa \\Largo B. Pontecorvo 5\\I-56127  Pisa\\Italy}
\email{franciosi@dm.unipi.it}
\author{Rita Pardini}
\address{Rita Pardini\\Dipartimento di Matematica\\Universit\`a di Pisa \\Largo B. Pontecorvo 5\\I-56127  Pisa\\Italy}
\email{pardini@dm.unipi.it}
\author{S\"onke Rollenske}
\address{S\"onke Rollenske\\Fakult\"at f\"ur Mathematik\\Universt\"at Bielefeld\\Universit\"atsstr. 25\\33615 Bielefeld\\Germany}
\email{rollenske@math.uni-bielefeld.de}
\subjclass[2010]{14J10, 14J29, 14F35}
\keywords{stable surfaces, irregular surfaces, moduli of surfaces of general type}
\begin{document}
\begin{abstract}
We describe some methods to compute fundamental groups, (co)homo\-logy, and irregularity of semi-log-canonical surfaces.

As an application, we show that there are exactly two irregular Gorenstein stable surfaces with $K^2=1$, both of which have $\chi(X) = 0$ and  $\Pic^0(X)=\IC^*$ but different homotopy type. 
\end{abstract}

\maketitle

\setcounter{tocdepth}{1}
\tableofcontents

\section{Introduction}
Whenever one finds a new way to construct interesting varieties, one is subsequently presented with the task of computing algebraic and topological invariants. We address in this paper the case of  surfaces with semi-log-canonical (slc) singularities. 

Semi-log-canonical   surfaces with  ample canonical divisor are  called stable  and their moduli space  is a natural compactification of  the Gieseker moduli space of canonical models of surfaces of general type (see \cite{kollar12, kollarModuli}). Indeed, this was one of the motivations for the introduction of this class of singularities by Koll\'ar and Shepherd-Barron in \cite{ksb88}. 

Here we start by    giving   a general method  to compute (co)homology  and fundamental group of a variety (or complex space) from a birational modification (cf.  Section \ref{section: computations}); in the applications this can be, for instance,   the normalisation or a partial resolution. 
Then we  explain how to compute   the irregularity $q=h^1(\OO_X)$ and also the automorphism group of a non-normal slc surface. 

Our motivation for these results comes from our work in progress on  Gorenstein stable surfaces with $K^2=1$ begun in \cite{fpr14}.   More precisely, since  minimal surfaces of general type with $K^2=1$  are known to be regular, we wished to  decide  whether  the same is true in the  stable  case.
In the smooth case regularity is  proven via  a covering trick:  if $q>0$ there exist \'etale coverings of arbitrary degree and these would violate the Noether-inequality (\cite[Lem.~14]{bombieri73}).

For stable surfaces, even Gorenstein, this argument does not work because  the slope of the Noether line is 1 \cite{liu-rollenske13}. Still, the situation is remarkably similar:
\begin{custom}[Theorem A]
There are exactly two irregular Gorenstein stable surfaces with $K^2=1$. They  both  have $\chi(X)=0$ and $q(X)=1$, the same  normalisation ($=\IP^2$), the same integral homology and Picard group, but they have different homotopy type.

Each one  of these  surfaces corresponds to   a connected component of the moduli space of (not necessarily Gorenstein) stable surfaces with $K^2=1$. 
\end{custom}
Since smooth  surfaces of general type have $\chi(X)>0$, the irregular surfaces we find  cannot be smoothable. A natural question raised by Theorem A is whether there are non-Gorenstein stable surfaces with $K^2=1$ and $\chi\le 0$. 

Our proof consists of two steps: first  in Section \ref{section: classical} we prove by ``classical'' methods (restriction to a canonical curve) that if $\chi(X)>0$ and $K^2=1$ then $q(X)=0$. 
By the results of \cite{fpr14},  this implies that an irregular surface $X$ with $K^2=1$  has  $\chi(X)=0$ and  is a projective plane glued to itself along four lines. 
Secondly,  in Section~\ref{section: 4 lines} we classify  Gorenstein stable surfaces  constructed from the projective plane glued to itself along four lines, showing in particular that there are exactly two with $\chi(X)=0$.  The topological and algebraic invariants  of these surfaces are  computed in the  last section \ref{section: chi=0}, using the methods previously devised. 

In a forthcoming work we will study the Gorenstein stable surfaces with $K^2 =1$ and $\chi(X)>0$ (hence  $q(X)=0$) and their moduli spaces.

\subsection*{Acknowledgements}
 The first author  is a member of GNSAGA of INDAM. The third author is grateful for support of the DFG through the Emmy Noether program and SFB 701. The collaboration  benefited immensely from a visit of the third author in Pisa supported by GNSAGA of INDAM. This project was partially supported by PRIN 2010 ``Geometria delle Variet\`a Algebriche'' of italian MIUR. 
 
We are indepted to Stefan Bauer, Kai-Uwe Bux, Michael L\"onne, Hanno von Bodecker for guidance around the pitfalls of algebraic topology. Kai-Uwe Bux also explained to us how to prove that the fundamental  groups computed in Proposition~\ref{prop; topological invariants} are not isomorphic. 
The third author is grateful to Wenfei Liu for many discussions on stable surfaces and to Filippo Viviani for some helpful email communication.

\section{Semi-log-canonical  surfaces}\label{sec:slc_surf}
In this section we recall briefly the  facts about semi-log-canonical   surfaces that we use  later. 
Let $X$ be a {\em demi-normal } surface, that is,  $X$ satisfies Serre's condition $S_2$ and  at each point of codimension one either it  is  regular or it has an ordinary double point.
We denote by  $\pi\colon \bar X \to X$ the normalisation of $X$.
Observe that  $X$ is not assumed irreducible; in particular, $\bar X$ is possibly disconnected.
The conductor ideal
$ \shom_{\ko_X}(\pi_*\ko_{\bar X}, \ko_X)$
is an ideal sheaf in both $\ko_X$ and $\ko_{\bar X} $ and as such defines subschemes
$D\subset X \text{ and } \bar D\subset \bar X,$
both reduced and pure of codimension 1; we often refer to $D$ as the {\em non-normal locus} of $X$.

\begin{defin}\label{defin: slc}
The  demi-normal surface $X$ is said to have \emph{semi-log-canonical (slc)}  singularities if it satisfies the following conditions: 
\begin{enumerate}
 \item The canonical divisor $K_X$ is $\IQ$-Cartier.
\item The pair $(\bar X, \bar D)$ has log-canonical (lc) singularities (cf. \cite[\S 4.1]{Kollar-Mori}). 
\end{enumerate}
It  is called a {\em stable}  surface 
 if in addition $K_X$ is ample. In that case we define the {\em geometric genus} of $X$ to be $ p_g(X) = h^0(X, K_X) = h^2(X, \ko_X)$ and the {\em irregularity} as $q(X) = h^1(X, K_X) = h^1(X, \OO_X)$ (cf. \cite[Lem.~3.3]{liu-rollenske14} for the last equality). 
A Gorenstein stable surface is a stable surface such that $K_X$ is a Cartier divisor.
\end{defin}
\subsection{Koll\'ar's gluing principle}\label{ssec: kollar}

Let $X$ be a demi-normal surface. Since $X$ has at most double points in codimension one, the map $\pi\colon \bar D \to D$ on the conductor divisors is generically a double cover and thus  induces a rational involution on $\bar D$. Normalising the conductor loci we get an honest involution $\tau\colon \bar D^\nu\to \bar D^\nu$ such that $D^\nu = \bar D^\nu/\tau$, where $\bar D^{\nu}$, $D^{\nu}$ are the normalizations of $\bar D$, resp. $D$.

\begin{theo}[{\cite[Thm.~5.13]{KollarSMMP}}]\label{thm: triple}
Associating to a stable surface $X$ the triple $(\bar X, \bar D, \tau\colon \bar D^\nu\to \bar D^\nu)$ induces a one-to-one correspondence
 \[
  \left\{ \text{\begin{minipage}{.12\textwidth}
 \begin{center}
         stable  surfaces  
 \end{center}
         \end{minipage}}
 \right\} \leftrightarrow
 \left\{ (\bar X, \bar D, \tau)\left|\,\text{\begin{minipage}{.37\textwidth}
   $(\bar X, \bar D)$ log-canonical pair with 
  $K_{\bar X}+\bar D$ ample, \\
   $\tau\colon \bar D^\nu\to \bar D^\nu$  involution s.th.\
    $\Diff_{\bar D^\nu}(0)$ is $\tau$-invariant.
            \end{minipage}}\right.
 \right\}.
 \]
 \textbf{\upshape Addendum:} In the above correspondence the surface $X$ is Gorenstein if and only if  $K_{\bar X}+\bar D$ is Cartier and $\tau$ induces a fixed-point free involution on the preimages of the nodes of $\bar D$.\end{theo}

For the definition of the different see for example \cite[5.11]{KollarSMMP}. The addendum is proven in \cite[\S 3.1]{fpr14}.

An important consequence, which allows to understand the geometry of stable  surfaces from the normalisation, is that 
\begin{equation}\label{diagr: pushout}
\begin{tikzcd}
    \bar X \dar{\pi}\rar[hookleftarrow]{\bar\iota} & \bar D\dar{\pi} & \bar D^\nu \lar[swap]{\bar\nu}\dar{/\tau}
    \\
X\rar[hookleftarrow]{\iota} &D &D^\nu\lar[swap]{\nu}
    \end{tikzcd}
\end{equation}
is a pushout diagram.

\begin{rem} \label{rem:pushout}
Consider diagram \eqref{diagr: pushout} for   any  demi-normal surface $X$, not necessarily stable.  By  \cite[Prop.~5.3]{KollarSMMP} and  proof thereof,  $X$   has  the following universal property: given any finite surjective  morphism $f\colon \bar  X\to Y$ that induces a $\tau$-invariant map $\bar D^{\nu}\to Y$, there is a unique morphism $g\colon X\to Y$ such that $f=g\circ \pi$.

\end{rem}

\section{Computing invariants}\label{section: computations}
In this section we illustrate some methods for the computation of invariants  of  non-normal surfaces. We apply these results  to stable Gorenstein surfaces with $K^2=1$ in Section \ref{sec:k2=1}.

\subsection{Topology}
First we compute the fundamental group and (co)homology of a non-normal surface in terms of its normalisation. 

The proof of the following result is a synthesis of some conversations with  Stefan Bauer, Kai-Uwe Bux, Michael L\"onne, and Hanno von Bodecker.
It is probably well known to experts.
\begin{prop}\label{prop: homotopy pushout}
 Let $\pi\colon \bar X\to X$ be a  holomorphic map of compact complex analytic spaces. Assume $A$ is a closed  analytic subspace of $X$ such that,  if we set   $E = \inverse\pi A$,  the map $\pi\colon \bar X \setminus E\to X\setminus A$ is an isomorphism. 

Let $M$ be the double mapping cylinder of $\pi\colon E \to A$ and the inclusion $\iota\colon E\to \bar X$, that is, $M = A\cup_\pi  E\times I \cup_\iota \bar X$ where we glue  $E\times\{0\}$ to $A$ via $\pi$ and $E\times \{1\}$ to $\bar X$ via $\iota$:
{
\small
\[\begin{tikzpicture}[thick, scale = 0.6]
 \draw[rounded corners] (-2.5, -2.2) rectangle (1, 2.2) node [right]{$\bar X$};
\draw[line width =.2cm, white]  (-6,1) -- (-1,1.5) (-6,-1) -- (-1,-1.5);
\draw (-6,1) to node[pos=.6, above]{$I$} (-1,1.5) (-6,-1) to  (-1,-1.5);
\draw[MidnightBlue] (-6,1)  node[left]{$A$}  .. controls (-6.5,.25) and (-5.5,-.25) ..(-6,-1);
 \draw[thin, dashed] (-3.5,0) circle [ x radius = .4166cm, y radius = 1.25cm];
 \draw[thin, dashed] (-4.75,0) circle [ x radius = .375cm, y radius = 1.125cm];
 \draw[ForestGreen] (-1,0) circle [ x radius = .5cm, y radius = 1.5cm] node[above]{$E$};
\end{tikzpicture}
\]
}
Then the natural map $M\to X$ is a homotopy equivalence.
\end{prop}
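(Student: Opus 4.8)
The plan is to factor the natural map $M\to X$ through the \emph{strict} topological pushout and then to compare that pushout with the homotopy pushout $M$.

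Let $P$ be the pushout of $A\xleftarrow{\pi}E\hookrightarrow\bar X$ in topological spaces, i.e.\ $P=(\bar X\sqcup A)/\!\sim$ with $e\sim\pi(e)$ for $e\in E$. First I would check that the tautological map $P\to X$, given by $\pi$ on $\bar X$ and by the inclusion $A\hookrightarrow X$, is a homeomorphism. It is surjective because $\pi(\bar X\setminus E)=X\setminus A$, and injective because over a point of $X\setminus A$ there is a single preimage (as $\pi$ is an isomorphism there), while over $a\in A$ the whole fibre $\pi^{-1}(a)\subseteq E$ has been collapsed onto the point $a\in A\subseteq P$. As $\bar X\sqcup A$ is compact and $X$ is Hausdorff, this continuous bijection is a homeomorphism, so $P\cong X$. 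Under this identification the natural map of the statement is precisely the canonical collapse $M\to P$ that sends $(e,t)\in E\times I$ to the common class of $e$ and $\pi(e)$.

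It therefore remains to show that $M\to P$ is a homotopy equivalence. Since $M$ is the double mapping cylinder (i.e.\ the homotopy pushout) and $P$ the strict pushout of the same span, this holds as soon as one of the two legs of $A\xleftarrow{\pi}E\hookrightarrow\bar X$ is a cofibration; the one I would use is the closed inclusion $\iota\colon E\hookrightarrow\bar X$. Verifying that $\iota$ is a cofibration is the step I expect to be the main obstacle, as it is exactly where analytic geometry enters rather than formal homotopy theory. I would deduce it from triangulability: a compact complex analytic space admits a triangulation in which a prescribed closed analytic subspace is a subcomplex (\L ojasiewicz--Hironaka), and the inclusion of a subcomplex into a finite simplicial complex is a cofibration; equivalently one realises $(\bar X,E)$ as an NDR pair using a subanalytic neighbourhood of $E$ that deformation retracts onto it.

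Granting this, I would conclude with the gluing lemma for cofibrations (Mather's cube theorem). Writing $\operatorname{Cyl}(\iota)=\bar X\cup_E(E\times I)$ for the mapping cylinder of $\iota$, one has $M=A\cup_\pi\operatorname{Cyl}(\iota)$, so $M$ and $P$ are the pushouts of the two squares
\[
\begin{tikzcd}
E \rar \dar[swap]{\pi} & \operatorname{Cyl}(\iota) \dar \\
A \rar & M
\end{tikzcd}
\qquad
\begin{tikzcd}
E \rar{\iota} \dar[swap]{\pi} & \bar X \dar \\
A \rar & P
\end{tikzcd}
\]
in which the horizontal maps $E\to\operatorname{Cyl}(\iota)$ and $E\xrightarrow{\iota}\bar X$ are cofibrations. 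The comparison maps $\operatorname{id}_A$, $\operatorname{id}_E$ and the cylinder retraction $\operatorname{Cyl}(\iota)\to\bar X$ are homotopy equivalences, so the gluing lemma yields that the induced map $M\to P\cong X$ is a homotopy equivalence, as required.
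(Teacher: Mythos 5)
Your proof is correct and takes essentially the same route as the paper: the analytic input (\L ojasiewicz triangulation, equivalently the NDR property) makes $\iota\colon E\hookrightarrow\bar X$ a cofibration, and the comparison of the double mapping cylinder with the strict pushout along a cofibration is exactly the lemma the paper cites from May. The only differences are that you make explicit the point-set identification of $X$ with the strict pushout (which the paper leaves implicit) and that you call the final step ``Mather's cube theorem'' when it is really just the standard gluing lemma for cofibrations --- a misnomer, not a mathematical gap.
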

\begin{proof}
 By \cite{lojasiewicz64} (see also \cite[Thm.~I.8.8]{BHPV}) the subspace $E$ is a neighbourhood deformation retract in $\bar X$. This implies  \cite[Ch.\ 6,\  \S\ 4]{May}  that the inclusion $E \into \bar X$ is a cofibration and we conclude by \cite[Ch.\ 10,\  \S\ 7, Lemma on p.\ 78]{May}.
\end{proof}
We formulate the application of the above result only in the case we use, that is, for the normalisation of a demi-normal surface.
\begin{cor}\label{cor: mayer vietoris and fundamental group as amalgamated product}
 Let $X$ be a demi-normal surface and let $D$, $\bar D$, and $\bar X$  be as in  \eqref{diagr: pushout}. Then:
\begin{enumerate}
 \item There is a   Mayer-Vietoris  exact sequence for homology
\[ \begin{tikzcd}
{}\rar& H_i(\bar D, \IZ) \rar{(\pi_*, \bar \iota_*)} & H_i(D, \IZ)\oplus H_i(\bar X, \IZ)\rar{\iota_*-\pi_*} & H_i(X, \IZ)\rar&
\end{tikzcd}\]
\item  Suppose  in addition that $\bar D$ is connected.
Then 
\[\pi_1(X) \isom \pi_1(D)\star_{\pi_1(\bar D)}\pi_1(\bar X),\]
where $\star$ denotes  amalgamated product of groups.
\item Suppose that $\bar D$ is connected and $\pi_1(D)$ and $\pi_1(\bar X)$ are abelian. Then
\[\pi_1(X) \isom H_1(D, \IZ)\star_{H_1(\bar D, \IZ)}H_1(\bar X, \IZ).\]
\end{enumerate}
\end{cor}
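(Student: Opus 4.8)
The plan is to reduce all three statements to standard facts about a homotopy pushout, the homotopy type of $X$ being supplied by Proposition~\ref{prop: homotopy pushout}. First I would apply that proposition to the normalisation $\pi\colon\bar X\to X$, taking $A=D$ the non-normal locus and $E=\pi^{-1}(D)=\bar D$. Since $\pi$ is finite and restricts to an isomorphism over $X\setminus D$, the hypotheses hold, and one obtains a homotopy equivalence $X\simeq M$, where $M$ is the double mapping cylinder of $\pi\colon\bar D\to D$ and $\bar\iota\colon\bar D\hookrightarrow\bar X$. Everything then reduces to computing the homology and fundamental group of $M$.

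Next I would choose the standard open cover $U=D\cup(\bar D\times[0,2/3))$ and $V=(\bar D\times(1/3,1])\cup\bar X$ of $M$; sliding along the cylinder coordinate gives deformation retractions $U\simeq D$, $V\simeq\bar X$ and $U\cap V=\bar D\times(1/3,2/3)\simeq\bar D$, all compatible with the structure maps. For (1) I feed $\{U,V\}$ into the Mayer-Vietoris sequence: under the identifications $H_i(U)\isom H_i(D)$, $H_i(V)\isom H_i(\bar X)$, $H_i(U\cap V)\isom H_i(\bar D)$, and transporting across $H_i(M)\isom H_i(X)$, the two maps become $(\pi_*,\bar\iota_*)$ and $(a,b)\mapsto\iota_*(a)-\pi_*(b)$, the second map having components induced by the inclusion $D\hookrightarrow X$ and by $\pi\colon\bar X\to X$; this is pure bookkeeping of maps and signs. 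For (2) the same cover feeds into van Kampen's theorem: connectedness of $\bar D$ (together with that of $X$, hence of $D$ and $\bar X$) makes $U$, $V$, $U\cap V$ all path-connected, which is exactly the hypothesis yielding $\pi_1(X)\isom\pi_1(M)\isom\pi_1(D)\star_{\pi_1(\bar D)}\pi_1(\bar X)$.

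Finally, (3) is a purely group-theoretic consequence of (2). The amalgamated product is the pushout of $\pi_1(D)\xleftarrow{\pi_*}\pi_1(\bar D)\xrightarrow{\bar\iota_*}\pi_1(\bar X)$ in the category of groups. When $\pi_1(D)$ and $\pi_1(\bar X)$ are abelian they coincide with their abelianisations $H_1(D,\IZ)$ and $H_1(\bar X,\IZ)$, and both maps out of $\pi_1(\bar D)$ then factor through the abelianisation map $q\colon\pi_1(\bar D)\to H_1(\bar D,\IZ)$. Since a pushout is unchanged under pre-composing its two legs with a common surjection --- both diagrams have literally the same cocones --- the pushout equals $H_1(D,\IZ)\star_{H_1(\bar D,\IZ)}H_1(\bar X,\IZ)$.

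I expect the only genuinely delicate points to be the map-and-sign bookkeeping in (1) and the observation in (3) that the amalgamation sees only the images of the generators of $\pi_1(\bar D)$, so that an abelian target permits replacing $\pi_1(\bar D)$ by $H_1(\bar D,\IZ)$ even though $\pi_1(\bar D)$ itself need not be abelian; the topological heavy lifting is entirely absorbed into Proposition~\ref{prop: homotopy pushout}.
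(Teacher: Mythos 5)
Your proposal is correct and follows essentially the same route as the paper: apply Proposition~\ref{prop: homotopy pushout} to the normalisation with $A=D$, $E=\bar D$, cover the resulting double mapping cylinder by two open sets deformation retracting to $D$ and to $\bar X$ whose intersection retracts to $\bar D$, and feed this cover into Mayer--Vietoris and Seifert--van Kampen. The only difference is cosmetic: the paper leaves part (3) implicit in "gives the claimed result", whereas you spell out the (correct) group-theoretic point that the amalgamated product is unchanged when both legs factor through the abelianisation $\pi_1(\bar D)\to H_1(\bar D,\IZ)$.
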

\begin{proof}
We apply Proposition \ref{prop: homotopy pushout} to $\pi \colon \bar X\to X$ with $E=\bar D$ and $A= D$. 
Choose a base point $x_0\in \bar D$ and take as a base point on  the homotopy model  $Z\cup_{\bar D}\bar X$ the point $(x_0, 1/2)$ on the mapping cylinder. The open set $\{(x,t)\in Z\mid t>1/4\}$ and the complement of $\{(x,t)\in Z\mid t\leq3/4\}$ cover $Z\cup_{\bar D}\bar X$. Applying Mayer-Vietoris, respectively  Seifert-van Kampen, to this decomposition gives the claimed result.
\end{proof}
\begin{rem}
 In the category of complex algebraic varieties the cohomology groups carry a mixed Hodge structure; the compatibility of the Mayer-Vietoris sequence with this additional structure is proven in \cite[ 5.37]{Peters-Steenbrink}.
\end{rem}

\subsection{Automorphisms}\label{ssec:aut}
Let $X$ be a demi-normal   surface and let $\sigma\in \Aut(X)$. Then $\sigma$ induces an automorphism $\bar\sigma$ of $\bar X$ such that $\bar\sigma(\bar D)=\bar D$; we denote by  $\bar\sigma^{\nu}$ the automorphism  of $\bar D^{\nu}$ induced by the restriction of $\bar \sigma$. Clearly $\tau$ and $\bar\sigma^{\nu}$ commute. We denote by $\Aut(\bar X, \bar D, \tau)$ the subgroup of $\Aut(\bar X)$ consisting of the automorphisms $\psi$ such that $\psi(\bar D)=\bar D$ and the induced automorphism $\psi^{\nu}$ of $\bar D^{\nu}$ commutes with $\tau$. The map  $\Aut(X)\to \Aut(\bar X, \bar D,\tau)$ defined by $\sigma\to \bar \sigma$ is clearly  an injective homomorphism.
In addition, one has:
\begin{lem}\label{lem:autXbar} Let $X$ be a demi-normal surface obtained from a triple  $(\bar X,\bar D, \tau)$.
Then $\Aut(X)\to \Aut(\bar X, \bar D,\tau)$ is an isomorphism.

\end{lem}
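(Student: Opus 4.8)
Since the map $\sigma\mapsto\bar\sigma$ is already known to be an injective homomorphism, the plan is to prove surjectivity, and the natural tool is the universal property of the pushout \eqref{diagr: pushout} recorded in Remark~\ref{rem:pushout}. So I fix an element $\psi\in\Aut(\bar X,\bar D,\tau)$ and look for $\sigma\in\Aut(X)$ whose induced automorphism of $\bar X$ is $\psi$. The idea is to descend the finite surjective morphism $f:=\pi\circ\psi\colon\bar X\to X$ along $\pi$.

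To apply Remark~\ref{rem:pushout} I must check that $f$ induces a $\tau$-invariant map $\bar D^\nu\to X$, and this is the step where the defining condition of $\Aut(\bar X,\bar D,\tau)$ enters; I expect it to be the main (though not deep) point. Since $\psi(\bar D)=\bar D$, restriction gives $\psi|_{\bar D}$ with $\psi\circ\bar\iota=\bar\iota\circ\psi|_{\bar D}$, and the lift $\psi^\nu$ to $\bar D^\nu$ satisfies $\bar\nu\circ\psi^\nu=\psi|_{\bar D}\circ\bar\nu$. Hence the composite $\bar D^\nu\to\bar D\hookrightarrow\bar X\xrightarrow{f}X$ equals $(\pi\circ\bar\iota\circ\bar\nu)\circ\psi^\nu$. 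Now $\pi\circ\bar\iota\circ\bar\nu$ is $\tau$-invariant by construction, as the diagram \eqref{diagr: pushout} shows it factors through the quotient $\bar D^\nu\to D^\nu=\bar D^\nu/\tau$; and $\psi^\nu$ commutes with $\tau$ by hypothesis. Combining these, $(\pi\circ\bar\iota\circ\bar\nu)\circ\psi^\nu\circ\tau=(\pi\circ\bar\iota\circ\bar\nu)\circ\tau\circ\psi^\nu=(\pi\circ\bar\iota\circ\bar\nu)\circ\psi^\nu$, which is exactly the required $\tau$-invariance.

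Granting this, Remark~\ref{rem:pushout} yields a unique morphism $\sigma\colon X\to X$ with $\pi\circ\psi=\sigma\circ\pi$. To see that $\sigma$ is an automorphism I would run the same construction on $\psi^{-1}$, obtaining $\sigma'\colon X\to X$ with $\pi\circ\psi^{-1}=\sigma'\circ\pi$; then $\sigma'\circ\sigma\circ\pi=\pi$ and $\sigma\circ\sigma'\circ\pi=\pi$, so the uniqueness clause of the universal property (applied with the $\tau$-invariant map $f=\pi$) forces $\sigma'\circ\sigma=\sigma\circ\sigma'=\mathrm{id}_X$.

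It remains to check that $\sigma$ maps to $\psi$. The automorphism $\sigma$ lifts uniquely to an automorphism $\bar\sigma$ of the normalisation, and by construction $\pi\circ\bar\sigma=\sigma\circ\pi=\pi\circ\psi$. Since $\pi$ is an isomorphism over $X\setminus D$, it is injective on the dense open set $\bar X\setminus\bar D$, so $\bar\sigma$ and $\psi$ agree there and hence everywhere by continuity. Thus $\bar\sigma=\psi$, proving that $\sigma\mapsto\bar\sigma$ is onto. The only genuine verification is the $\tau$-invariance computation above; everything else is formal manipulation with the universal property together with a density argument.
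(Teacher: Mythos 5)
Your proof is correct and follows essentially the same route as the paper: the paper's argument is exactly to apply Remark~\ref{rem:pushout} to $f=\pi\circ\psi$ and then dismiss invertibility as ``general nonsense.'' You have simply made explicit the details the paper suppresses---the $\tau$-invariance check, the construction of the inverse from $\psi^{-1}$ via uniqueness, and the identification $\bar\sigma=\psi$---all of which are carried out correctly.
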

\begin{proof} It suffices to prove that the map is surjective, namely that given an automorphism  $\psi\in \Aut(\bar X, \bar D, \tau)$ there exists $\sigma \in \Aut(X)$ such that $\bar \sigma=\psi$.  Applying   Remark \ref{rem:pushout} to the finite surjective map  $f=\pi\circ \psi\colon \bar X\to X$  one obtains    $\sigma\colon X\to X$ such that $\bar \sigma=\psi$. By general nonsense $\sigma$ is indeed an automorphism.  \end{proof}

\subsection{Irregularity}\label{section: irregularity}
Let $X$ be an slc surface, not necessarily Gorenstein. While it is easy to compute $\chi(X)$ from the normalisation, the calculation of the irregularity (or geometric genus) is more subtle: the irregularity can either  drop or increase in the normalisation (see \cite[Sect.~5.3.1]{liu-rollenske13} for some examples.)

In this section, we give an algorithm to compute the irregularity in concrete examples, at least as long as  the normalisation has irregularity $q=0$; 
the reason for considering here  only slc surfaces is that in our computations we exploit the classification of slc surface singularities.

 We keep  the notations from Diagram \eqref{diagr: pushout}.
We define a sheaf $\kq$ via the following diagram:
\begin{equation}\label{eq: q1}
\begin{tikzcd}
 {} & 0\dar & 0\dar & 0\dar\\
0\rar& \ki_D \rar\dar[equal] & \ko_X \rar\dar& \ko_D\rar\dar& 0\\
0\rar& \pi_*\ko_{\bar X}(-\bar D) \rar & \pi_*\ko_{\bar X} \rar\dar&\pi_*\ko_{\bar D}\rar\dar& 0\\
&& \kq \rar[equal]\dar&\kq \dar\\
&&0&0
\end{tikzcd}
\end{equation}
Note that the two rows of the diagram give the formula
\begin{equation}
\label{eq: formula chi}\chi(X)=\chi(\bar X)-\chi(\bar D)+\chi(D).
\end{equation}
We wish   to use the second column to compute also the irregularity  $q(X)$, so we need to understand  $\kq$. First note that $\kq$ is a torsion-free sheaf on the curve $D$, because $\bar D$ is a curve without embedded points. We now compare the third column with the analogous sequence on the normalisations.
 Recall that $\pi\colon \bar D^\nu\to D^\nu$  is a double cover with involution $\tau$, so $\pi_*\ko_{\bar D^\nu}=\ko_{D^\nu}\oplus \inverse\kl$ where $\kl^{\tensor 2}$ is the line bundle associated with  the branch locus. Then there is a diagram with exact rows and columns
\begin{equation}\label{eq: q2}
\begin{tikzcd}
{}&0 \dar&0\dar&0\dar\\
0\rar& \ko_D \rar\dar& \nu_*\ko_{D^\nu} \rar\dar& \nu_*\ko_{D^\nu}/\ko_D\rar\dar& 0\\
0\rar& \pi_*\ko_{\bar D} \rar\dar& \pi_*\bar\nu_*\ko_{\bar D^\nu} \rar\dar& \pi_*\left(\bar \nu_*\ko_{\bar D^\nu}/\ko_{\bar D}\right)\rar\dar& 0\\
0\rar& \kq \rar\dar& \nu_*\inverse\kl \rar\dar& \nu_*\inverse\kl/\kq\rar\dar& 0\\
&0&0&0
\end{tikzcd}
\end{equation}
Indeed, the first and second row, respectively column, are easily seen to be exact. Let $\kk$ be the kernel of  $\kq\to\nu_*\inverse\kl$. By the snake Lemma it injects into $\nu_*\ko_{D^\nu}/\ko_D$ and thus is supported at finitely may points. On the other hand it is included in the torsion-free sheaf on $\kq$ and thus $\kk=0$ and also the third row, respectively column, is exact.

We need to introduce some further notation locally analytically at a point $p\in D$. If $p$ is semi-log-terminal  then $\inverse\pi(p)$ does not contain nodal points of $\bar D$ (cf. \cite[Thm.~4.23]{ksb88}) and thus $\kq\isom \nu_*\inverse\kl$ at $p$  by the exactness of the third column.

So assume that $p\in D$ is a degenerate cusp or a quotient thereof. For the details of the following discussion we refer to \cite[Sect.~4.1]{liu-rollenske14}. Let $q_1, \dots, q_{\mu}\in \bar D$ be the preimages of $p$. If $q_i$ is a nodal point of $\bar D$, then we denote its two preimages in $\bar D^\nu$ by $r_i$ and $s_i$.

If $p$ is a degenerate cusp then all the $q_i$ are nodal points of $\bar D$ and we can order the $q_i$ in such a way, that $\tau(s_i) = r_{i+1}$ where the index is computed modulo $\mu$. We illustrate this in Figure \ref{fig: local at deg cusp} for $\mu=3$. 

If $p$ is a quotient of a degenerate cusp, then one can order the $q_i$ in such a way that  $q_2, \dots , q_{\mu-1}$ are nodes and $\tau$ acts as above for $\mu=2,\dots, \mu-2$. The \emph{end-points} $q_1$ and $q_{\mu}$ can be of two types: either a nodal point of $\bar D$ which is the image of a  fixed point  of the involution $\tau$ or a dihedral point of $\bar X$ which is a smooth point of $\bar D$. If $q_1$ is a smooth point of $\bar D$, then its preimage in $\bar D^\nu$ is mapped  to $r_2$ by $\tau$, while if it is a node  with preimages $r_1, s_1$ then $s_1$ is mapped to $r_2$ by $\tau$ while $s_1$ is fixed by $\tau$. The situation for $q_{\mu}$ is similar.

\begin{figure}[t]
 \begin{tikzpicture}
[thick, every path/.style ={ fill=black, radius = 2.5pt},
every node/.style ={black},
L1/.style = {MidnightBlue},
L2/.style = {ForestGreen},
L3/.style = {RedOrange},
scale = .6]

\begin{scope}[ xshift = 0cm, yshift = 10cm]
\node at (-3,3) {$\bar D$};
\draw[L3] (0,0) ++(285:0.5)  ++(45: .25cm) -- ++(225:3cm);
\draw[L1] (0,0)++(285:0.5)++(-.25,0) -- ++(3,0);
\draw[L2] (0,0)++(165:0.5)++(0,-.25) -- ++(0,3);
\draw[L2] (45:0.5) ++(0,-.25) -- ++(0,3);
\draw[L3] (0,0)++(165:0.5)  ++(45: .25cm) -- ++(225:3cm);
\draw [L1] (45:0.5)++(-.25,0) -- ++(3,0);
\draw (0,0) ++(285:0.5) circle node[below right]{$q_1$};
\draw (45:0.5) circle  node[above right]{$q_2$};
\draw (0,0)++(165:0.5) circle  node[left]{$q_3$};
\draw[->] (0, -3) to  node[left] {$\pi$} ++(0,-1.5);
\end{scope}

\begin{scope}[xshift = 10cm, yshift = 10cm]
\node at (-3,3) {$\bar D^\nu$};

\draw[L3] (-0.5,-0.5)++(135:0.25)circle  node[above left]{$s_3$} ++(45: .25cm) -- ++(225:3cm);
\draw[L3] (-0.5,-0.5)++(135:-0.25) circle  node[below right]{$r_1$} ++(45: .25cm) -- ++(225:3cm);
\draw[L1] (1,-.25) circle  node[below right]{$s_1$} ++(-.25,0) -- ++(3,0);
\draw[L1] (1,.25) circle  node[above right]{$r_2$} ++(-.25,0)  -- ++(3,0);
\draw[L2] (.25,1) circle  node[above right]{$s_2$} ++(0,-.25) -- ++(0,3);
\draw[L2] (-.25,1) circle  node[above left]{$r_3$} ++(0,-.25) -- ++(0,3);
\draw[->] (0, -3) to  node[left] {$\pi$} ++(0,-1.5);
\draw[->] (-4, 0) to  node[above] {$\bar \nu$} ++(-1.5,0);

\draw (-0.5,-0.5)++(135:0.25)circle  node[above left]{$s_3$};
\draw (-0.5,-0.5)++(135:-0.25) circle  node[below right]{$r_1$};
\draw (1,-.25) circle  node[below right]{$s_1$};
\draw (1,.25) circle  node[above right]{$r_2$};
\draw (.25,1) circle  node[above right]{$s_2$};
\draw (-.25,1) circle  node[above left]{$r_3$};

\end{scope}

\begin{scope}
\node at (-3,3) {$D$};
\draw[L3] (0,0) circle  node[below right]{$p$} ++(45: .25cm) -- ++(225:3cm);
\draw[L2] (0,-.25) -- ++(0,3);
\draw[L1] (-.25,0) -- ++(3,0);
\draw (0,0) circle  node[below right]{$p$};
\end{scope}

\begin{scope}[xshift = 10cm]
\node at (-3,3) {$D^\nu$};
\draw[L3] (-0.5,-0.5) circle  node[below right]{$p_1$}++(45: .25cm) -- ++(225:3cm);
\draw[L1] (1,0) circle  node[above right]{$p_2$} ++(-.25,0) -- ++(3,0);
\draw[L2] (0,1) circle  node[above left]{$p_3$}++(0,-.25) -- ++(0,3);
\draw[->] (-4, 0) to  node[above] {$ \nu$} ++(-1.5,0);
\draw (-0.5,-0.5) circle  node[below right]{$p_1$};
\draw (1,0) circle  node[above right]{$p_2$};
\draw (0,1) circle  node[above left]{$p_3$};
\end{scope}
\end{tikzpicture}
\caption{Local notation at a degenerate cusp $p$ with $\mu=3$: $\bar\nu(r_i) = \bar\nu (s_i) = q_i$ and $ \tau(s_i) = r_{i+1}$.}\label{fig: local at deg cusp} 
\end{figure}
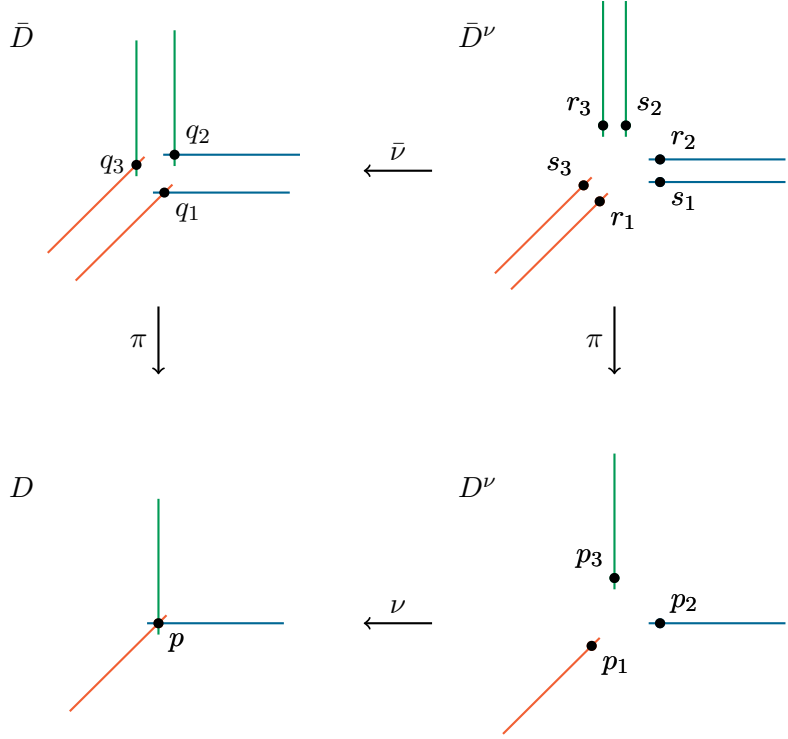

\begin{lem}\label{lem: L^-1/Q} With  the above notations, 
  let $p\in D$ be a point. Then 
\begin{enumerate}
 \item  The length of the sheaf $\nu_*\inverse\kl/\kq$ at  $p$ is
\[ l_p(\nu_*\inverse\kl/\kq)=\begin{cases}
                              1 &  \text{if $p$ is a degenerate cusp of $X$,}\\
 0  & \text{else.}
                             \end{cases}\]
\item If $p$ is a degenerate cusp, then   the linear form
$\phi_p\colon \ko_{\bar D^\nu, \inverse{(\pi\circ\bar\nu)}(p)}\to \IC$ defined by 
\[f\mapsto \sum_{i=1}^{\mu} (f(r_i)-f(s_i)),\]
 induces an isomorphism
\[\phi_p\colon \left(\nu_*\inverse \kl/\kq\right)_p \isom
 \left(\frac{\pi_*\left(\bar \nu_*\ko_{\bar D^\nu}/\ko_{\bar D}\right)}{\nu_*\ko_{D^\nu}/\ko_D}\right)_p
\overset{\isom}{\longrightarrow} \IC.\]
\end{enumerate}
\end{lem}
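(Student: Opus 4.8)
The whole statement is local at $p$, so the plan is to pass to the stalks at $p$ of the four sheaves in \eqref{eq: q2}: write $A=\ko_{D,p}$, $B=(\pi_*\ko_{\bar D})_p$, $\tilde A=(\nu_*\ko_{D^\nu})_p$ and $\tilde B=(\pi_*\bar\nu_*\ko_{\bar D^\nu})_p$. All four are $A$-modules, and the commuting square of inclusions in \eqref{eq: q2} lets me view $\tilde A$ and $B$ as $A$-submodules of $\tilde B=\prod_x\ko_{\bar D^\nu,x}$, the product over $x\in\inverse{(\pi\circ\bar\nu)}(p)$. Concretely $\tilde A=\tilde B^\tau$ is the subring of $\tau$-invariants, while $B$ consists of those tuples whose values at the two branches $r_i,s_i$ of each node $q_i$ of $\bar D$ have the same constant term (the normalisation condition at a node). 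The third column of \eqref{eq: q2} then identifies $(\nu_*\inverse\kl/\kq)_p$ with $\tilde B/(\tilde A+B)$, and this is the module whose length I must compute and on which $\phi_p$ must descend to an isomorphism. If $p$ is semi-log-terminal then $\inverse\pi(p)$ contains no nodes, so $B=\tilde B$ and the quotient vanishes, as already noted; hence I may assume $p$ is a degenerate cusp or a quotient of one.

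First I would dispose of part (2), which is almost formal once the reduction above is in place. I claim $\phi_p$ kills both $B$ and $\tilde A$. For $b\in B$ the two branch values at each node agree, $b(r_i)=b(s_i)$, so $\phi_p(b)=\sum_i\bigl(b(r_i)-b(s_i)\bigr)=0$. For $a\in\tilde A=\tilde B^\tau$ one has $a(\tau x)=a(x)$; since $\tau(s_i)=r_{i+1}$ the resulting index shift gives $\sum_i a(s_i)=\sum_i a(r_{i+1})=\sum_i a(r_i)$, whence $\phi_p(a)=0$. Thus $\phi_p$ vanishes on $\tilde A+B$ and descends to $\bar\phi_p\colon \tilde B/(\tilde A+B)\to\IC$. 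Moreover $\phi_p$ is not identically zero: on the idempotent $e\in\tilde B$ equal to $1$ at $r_1$ and $0$ elsewhere it takes the value $\phi_p(e)=1$. Hence $\ker\phi_p$ has colength $1$ in $\tilde B$ and contains $\tilde A+B$; once I know from part (1) that $\tilde A+B$ itself has colength $1$, the two must coincide and $\bar\phi_p$ is forced to be an isomorphism. So (2) follows from (1) together with the trivial vanishing and nonvanishing just checked.

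It remains to compute $l_p(\nu_*\inverse\kl/\kq)$. I would use the elementary identity
\[
l_p(\nu_*\inverse\kl/\kq)=\length\bigl(\tilde B/(\tilde A+B)\bigr)=\length(\tilde B/B)-\length\bigl(\tilde A/(\tilde A\cap B)\bigr),
\]
valid because $B$ has finite colength in $\tilde B$. The first term is $\length(\tilde B/B)=\#\{\,q_i\text{ that are nodes of }\bar D\,\}$, the sum of the $\delta$-invariants ($1$ at a node, $0$ at a smooth point). For the second term, a $\tau$-invariant tuple lies in $B$ exactly when at each node $q_i$ the constant terms of its values at $r_i$ and $s_i$ coincide, the higher-order terms being unconstrained; so $\tilde A/(\tilde A\cap B)$ is the quotient of the space of constant terms (one per $\tau$-orbit, i.e.\ one per point of $D^\nu$ over $p$) by the subspace cut out by these node equations, and its length is the rank of that system, namely $(\#\text{orbits})-(\#\text{components of the gluing graph }\Gamma)$, where $\Gamma$ has the $\tau$-orbits as vertices and the nodes $q_i$ as edges joining the orbits of $r_i$ and $s_i$. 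Combining the two terms, $\length\bigl(\tilde B/(\tilde A+B)\bigr)=(\#\text{edges})-(\#\text{vertices})+(\#\text{components})=b_1(\Gamma)$.

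Finally I would read off the answer from the shape of $\Gamma$. For a genuine degenerate cusp all $\mu$ of the $q_i$ are nodes and the relations $\tau(s_i)=r_{i+1}$ arrange the orbits into a single cycle, so $\Gamma$ is a $\mu$-cycle (a loop when $\mu=1$) and $b_1(\Gamma)=1$; explicitly $\length(\tilde B/B)=\mu$ while the equations force all $\mu$ constant terms equal, i.e.\ $\length(\tilde A/(\tilde A\cap B))=\mu-1$, giving $\mu-(\mu-1)=1$. For a quotient of a degenerate cusp the chain of identifications is broken at the two end-points, $\Gamma$ is a path, and $b_1(\Gamma)=0$. The hard part, and the only genuinely technical obstacle, is the bookkeeping at these end-points: when $q_1$ or $q_\mu$ is a node with a $\tau$-fixed branch the cover $\bar D^\nu\to D^\nu$ is ramified there, and one must check that the extra invariant factor and its corresponding node equation still cancel, so that $\Gamma$ remains a tree and the length stays $0$. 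Granting this, part (1) holds on the nose, and by the second paragraph part (2) follows as well.
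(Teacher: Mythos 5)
Your proposal is correct and follows essentially the same route as the paper's proof: both use diagram \eqref{eq: q2} to identify $\left(\nu_*\inverse\kl/\kq\right)_p$ with $\tilde B/(\tilde A+B)$ (the paper writes this as $\bigl(\pi_*(\bar \nu_*\ko_{\bar D^\nu}/\ko_{\bar D})\bigr)_p$ modulo $\bigl(\nu_*\ko_{D^\nu}/\ko_D\bigr)_p$), prove (1) by a dimension count, and deduce (2) from the facts that $\phi_p$ is non-zero and descends to the quotient. The one step you defer --- that an end-point node with a $\tau$-fixed branch adds one vertex and one edge to $\Gamma$, which therefore remains a tree --- does go through, since invariant functions at a fixed branch still surject onto the constant terms; so your write-up is in fact more detailed than the paper's, which disposes of (1) as ``a simple dimension count''.
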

\begin{proof}
In our chosen notation, by diagram \eqref{eq: q2} we can identify
\[\left(\nu_*\inverse \kl/\kq\right)_p\isom\left( \frac{\pi_*\left(\bar \nu_*\ko_{\bar D^\nu}/\ko_{\bar D}\right)}{\nu_*\ko_{D^\nu}/\ko_D}\right)_p=
\frac{\bigoplus_i \left(\ko_{r_i}\oplus\ko_{s_i}\right)/\ko_{q_i}}{\left(\bigoplus_i\ko_{p_i}\right)/\ko_p}.\]
The first part follows from a simple dimension count. For the second part we only need to note that the given function is non-zero and descends to the quotient.
\end{proof}
\begin{prop}\label{prop: algorithm q}
 Let $X$ be a connected slc surface with degenerate cusps $p^1, \dots, p^k$. At each $p^i$ choose an isomorphism $\phi_{p^i}$ as in Lemma \ref{lem: L^-1/Q}. Choose a basis $f_1, \dots f_l$ of  $H^0(\nu_*\inverse \kl)=H^0(\inverse \kl)\subset H^0(\ko_{\bar D^\nu})$, viewed as the  $\tau$-anti-invariant functions on $\bar D^\nu$,  and consider the $k\times l$-matrix $M = (\phi_{p^i}(f_j))_{ij}$. Then
\[h^0(\kq) = \dim \ker M.\]
In particular, if the normalisation $\bar X$ is the disjoint union of $m$  surfaces with irregularity $q=0$ then
\[ q(X)=h^0(\kq)-(m-1)=\dim \ker M-m+1.\]
\end{prop}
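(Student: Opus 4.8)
The plan is to read off both equalities from the long exact cohomology sequences attached to the two short exact sequences already produced in diagrams \eqref{eq: q1} and \eqref{eq: q2}, using Lemma \ref{lem: L^-1/Q} to make the relevant maps explicit.

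First I would take the third row of \eqref{eq: q2},
\[0\to \kq\to \nu_*\inverse\kl\to \nu_*\inverse\kl/\kq\to 0,\]
and pass to the associated long exact sequence in cohomology, whose beginning reads
\[0\to H^0(\kq)\to H^0(\nu_*\inverse\kl)\xrightarrow{\alpha} H^0(\nu_*\inverse\kl/\kq)\to\cdots.\]
Thus $h^0(\kq)=\dim\ker\alpha$, and the whole point is to identify $\alpha$ with the matrix $M$. Here I would combine three observations: (i) $H^0(\nu_*\inverse\kl)=H^0(\inverse\kl)$ is the $l$-dimensional space of $\tau$-anti-invariant functions on $\bar D^\nu$, with basis $f_1,\dots,f_l$; (ii) by Lemma \ref{lem: L^-1/Q}(1) the sheaf $\nu_*\inverse\kl/\kq$ is a skyscraper sheaf concentrated at the degenerate cusps $p^1,\dots,p^k$ with one-dimensional stalks, so $H^0(\nu_*\inverse\kl/\kq)=\bigoplus_{i=1}^k(\nu_*\inverse\kl/\kq)_{p^i}\isom\IC^k$ via the chosen isomorphisms $\phi_{p^i}$; and (iii) by Lemma \ref{lem: L^-1/Q}(2) the image of a global anti-invariant function $f$ in the stalk at $p^i$, read through $\phi_{p^i}$, is exactly the scalar $\phi_{p^i}(f)$. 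Combining these, $\alpha$ sends $f_j$ to the $j$-th column of $M=(\phi_{p^i}(f_j))_{ij}$, so that $\ker\alpha=\ker M$ and the first assertion $h^0(\kq)=\dim\ker M$ follows.

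For the ``In particular'' statement I would instead use the second column of \eqref{eq: q1},
\[0\to \ko_X\to \pi_*\ko_{\bar X}\to \kq\to 0,\]
whose long exact sequence contains
\[0\to H^0(\ko_X)\to H^0(\ko_{\bar X})\to H^0(\kq)\to H^1(\ko_X)\to H^1(\ko_{\bar X})\to\cdots,\]
where I use that $\pi$ is finite, so $H^i(X,\pi_*\ko_{\bar X})=H^i(\bar X,\ko_{\bar X})$. Since $X$ is connected one has $H^0(\ko_X)=\IC$; since $\bar X$ has $m$ connected components one has $H^0(\ko_{\bar X})=\IC^m$ with the first arrow the injective diagonal; and since each component has $q=0$ one has $H^1(\ko_{\bar X})=0$. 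Counting dimensions in the resulting exact sequence $0\to\IC\to\IC^m\to H^0(\kq)\to H^1(\ko_X)\to 0$ gives $q(X)=h^0(\kq)-(m-1)$, and substituting the first part yields $q(X)=\dim\ker M-m+1$.

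The routine part is the two dimension counts in the exact sequences; the one step that genuinely requires care is the identification of $\alpha$ with $M$, namely checking that evaluating a \emph{global} anti-invariant section in the local quotient at each cusp really reproduces the linear form $\phi_{p^i}$ of Lemma \ref{lem: L^-1/Q}. This is precisely where that lemma does the work, and once it is granted the remainder is formal homological algebra.
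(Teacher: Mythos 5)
Your proposal is correct and follows exactly the paper's own argument: the first equality comes from the long exact cohomology sequence of the third row of \eqref{eq: q2}, with Lemma \ref{lem: L^-1/Q} identifying the connecting map with the matrix $M$, and the second comes from the second column of \eqref{eq: q1} together with connectedness of $X$ and vanishing of $H^1(\ko_{\bar X})$. The paper states this in two sentences; your write-up simply makes the same identifications explicit.
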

\begin{proof}
 By Lemma \ref{lem: L^-1/Q} the matrix $M$ represents the map $H^0(\nu_*\inverse\kl)\to H^0(\nu_*\inverse\kl/\kq) )$ in the long exact cohomology sequence associated with the third row of \eqref{eq: q2}, which proves the first part. The second part follows from the second column of \eqref{eq: q1}.
\end{proof}
Note that the basis of $H^0(  \inverse\kl)\subset H^0(\ko_{\bar D^\nu})$ required in the above proposition is easy to choose.  If a component $C\subset \bar D^\nu$ is fixed by $\tau$,  then any $\tau$-anti-invariant function  on $C$ vanishes. If $\tau(C) = C'\neq C$ then the only $\tau$-anti-invariant functions on $C\sqcup C'$ are locally constant of the form $\lambda(1\sqcup -1)$ for $\lambda\in \IC$.

A sample computation in an explicit example is carried out in the proof of Proposition \ref{prop: algebraic invariants}.

\section{Gorenstein stable surfaces with $K^2 = 1$} \label{sec:k2=1}

In this section we prove Theorem A,  stated in the Introduction: it follows by combining Proposition \ref{prop:irregular}, Corollary \ref{cor: 4lines} and Propositions \ref{prop:table},  \ref{prop; topological invariants}, \ref{prop: algebraic invariants}.
Along the way, we give the complete classification of stable Gorenstein surfaces with  normalisation $(\IP^2, \text{4 lines})$.

\subsection{Classical methods to prove regularity}\label{section: classical}

Let $X$ be a stable Gorenstein surface with $K^2_X=1$. 
For $p_g(X)>0$ we look at the canonical curves:
\begin{lem}\label{lem:canonical-curve}
Let $X$ be a Gorenstein stable surface with   $K_X^2=1$  and let $C\
\in |K_X|$ be a canonical curve. 

 Then $C$ is a reduced and irreducible Gorenstein curve with  $p_a(C)=2$, not contained in  the non-normal locus.
\end{lem}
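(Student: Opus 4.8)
The plan is to read off the Gorenstein property and $p_a(C)=2$ from adjunction on the Gorenstein surface $X$, to deduce reducedness and irreducibility from a numerical count, and to exclude containment in the non-normal locus by a genus estimate on the normalisation.

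Since $X$ is Gorenstein and stable, $K_X$ is an ample Cartier divisor and $C\in|K_X|$ is an effective Cartier divisor. A demi-normal surface is $S_2$ and two-dimensional, hence Cohen--Macaulay, and an effective Cartier divisor in a Cohen--Macaulay scheme is again Cohen--Macaulay; thus $C$ is pure of dimension one with no embedded points. Adjunction identifies $\omega_C\cong\big(\omega_X\otimes\OO_X(C)\big)|_C$, which is a line bundle, so $C$ is Gorenstein, and
\[ 2p_a(C)-2=\deg\omega_C=(K_X+C)\cdot C=2K_X^2=2, \]
giving $p_a(C)=2$. For reducedness and irreducibility I would write the one-cycle of $C$ as $C=\sum_i a_i\Gamma_i$ with integral curves $\Gamma_i$ and $a_i\ge 1$. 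As $K_X$ is Cartier and ample, each $K_X\cdot\Gamma_i=\deg\big(K_X|_{\Gamma_i}\big)$ is a positive integer, so
\[ 1=K_X^2=K_X\cdot C=\sum_i a_i\,(K_X\cdot\Gamma_i) \]
leaves a single summand with $a_1=1$ and $K_X\cdot\Gamma_1=1$. Since $C$ has no embedded points, $C=\Gamma_1$ is integral.

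It remains to show $C\not\subseteq D$, which I expect to be the main point. Suppose $C\subseteq D$; as $D$ is reduced and $C$ is integral, $C$ is a component of $D$, and its reduced preimage $\bar C=\inverse\pi(C)\subseteq\bar D$ maps to $C$ with degree two, so $\pi_*\bar C=2C$. Using the standard normalisation formula $K_{\bar X}+\bar D=\pi^*K_X$ (with $K_{\bar X}+\bar D$ ample and, by the Addendum to Theorem~\ref{thm: triple}, Cartier) together with the projection formula,
\[ \deg\big((K_{\bar X}+\bar D)|_{\bar C}\big)=\pi^*K_X\cdot\bar C=K_X\cdot\pi_*\bar C=2K_X\cdot C=2. \]
Because $\bar C$ is contained in $\bar D$, adjunction along the boundary writes $(K_{\bar X}+\bar D)|_{\bar C}$ as $\omega_{\bar C}$ twisted by the effective different, so that
\[ 2\ge\deg\omega_{\bar C}=2p_a(\bar C)-2,\qquad\text{i.e.}\qquad p_a(\bar C)\le 2. \]

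To reach a contradiction I would compare $p_a(\bar C)$ with $p_a(C)=2$ through the degree-two cover $\pi\colon\bar C\to C$ of Gorenstein curves: writing $\pi_*\OO_{\bar C}=\OO_C\oplus N$ with $\deg N\le 0$ (the anti-effective branch twist) yields the Hurwitz-type identity $p_a(\bar C)=2p_a(C)-1-\deg N\ge 3$, contradicting $p_a(\bar C)\le 2$ and hence proving $C\not\subseteq D$. This comparison is the part I expect to be delicate: the splitting of $\pi_*\OO_{\bar C}$, equivalently the flatness of $\bar C\to C$, is clear at the generic node of $C$ but must be checked at the finitely many worse singular points of $C$, and the preimage $\bar C$ may instead split into two curves $\bar C',\bar C''$ exchanged by $\tau$. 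In that split case one has $\pi^*C=\bar C'+\bar C''\sim K_{\bar X}+\bar D$ and, since $\pi$ is birational, $(\bar C'+\bar C'')^2=C^2=1$, which forces $-K_{\bar X}$ to be effective; ruling this configuration out against $p_a(C)=2$ and the ampleness of $K_{\bar X}+\bar D$ is the step I anticipate will require the most care.
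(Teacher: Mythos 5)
Your treatment of the first two assertions is correct and is essentially the paper's argument: since $X$ is Gorenstein, $\omega_C\cong(\omega_X\otimes\OO_X(C))|_C$ is a line bundle of degree $(K_X+C)\cdot C=2K_X^2=2$, so $C$ is Gorenstein with $p_a(C)=2$; and writing the cycle of $C$ as $\sum_i a_i\Gamma_i$, ampleness of the Cartier divisor $K_X$ together with $K_X\cdot C=1$ forces a single component with multiplicity one, whence $C$ is integral (using, as you do, that $C$ is Cohen--Macaulay and so has no embedded points).

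The third assertion is where your argument has a genuine gap, and it is a gap of substance rather than bookkeeping. Your contradiction rests on the Hurwitz-type inequality $p_a(\bar C)\ge 2p_a(C)-1=3$ for the finite degree-two map $\bar C\to C$. For singular curves this inequality is simply false: if $C$ is integral with $p_a(C)=2$ and geometric genus $0$, composing a degree-two map $\IP^1\to\IP^1$ with the normalisation $\IP^1\to C$ gives a finite degree-two morphism onto $C$ from a curve with $p_a=0$. The mechanism you invoke --- $\pi_*\OO_{\bar C}=\OO_C\oplus N$ with $\deg N\le 0$ --- fails precisely because such a map need not be flat over the singular points of $C$, and a curve $C$ contained in $D$ has no reason to be smooth; so this step cannot merely be ``checked'', it would have to be replaced by an argument exploiting the particular geometry of $\bar C\subseteq\bar D$. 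The split case is in no better shape: if $\bar C=\bar C'+\bar C''$ with both components mapping birationally to $C$, then $p_a(\bar C)=p_a(\bar C')+p_a(\bar C'')-1$ can be as small as $-1$, so your upper bound $p_a(\bar C)\le 2$ yields no contradiction from numerics alone, and the claim that $(\bar C'+\bar C'')^2=1$ ``forces $-K_{\bar X}$ to be effective'' is unsubstantiated. In short, the obstruction to $C\subseteq D$ is not numerical, and a purely intersection-theoretic route is unlikely to close.

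What the paper does instead is a one-line local observation: non-containment follows from $C$ being a \emph{reduced Cartier} divisor. At the generic point of any component of $D$ the surface $X$ has an ordinary double point, i.e.\ two branches, locally $\{uv=0\}$ with the component of $D$ given by $\{u=v=0\}$. A local equation $f$ of a Cartier divisor whose support contains this component lies in the ideal $(u,v)$, hence vanishes along both branches, so the scheme $\{f=0\}$ has length at least $2$ at that generic point; equivalently, no component of the non-normal locus is Cartier. A reduced Cartier divisor therefore cannot contain any component of $D$, and that is the entire proof of the third assertion.
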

\begin{proof}
By assumption $\ko_X(C) = \omega_X$ is a line bundle, so $C$ is a Gorenstein curve and by adjunction we have   $p_a(C)=2$. Since $K_XC = 1$ and $K_X$ is an ample Cartier divisor, the curve $C$ is reduced and irreducible. Since no component of the non-normal locus is Cartier and  $C$ is reduced,  $C$ cannot be contained in the non-normal locus.
\end{proof}

\begin{prop}\label{prop:irregular}
 Let $X$ be a Gorenstein stable surface with $K_X^2 = 1$. Then  $q(X)>0$ if and only if $\chi(X)= 0$.
\end{prop}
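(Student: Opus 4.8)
The plan is to exploit the Noether-type formula $\chi(\OO_X)=1-q(X)+p_g(X)$, valid because $X$ is connected (so $h^0(\OO_X)=1$) and Serre duality gives $h^2(\OO_X)=p_g(X)$. Thus $\chi(X)=0$ is equivalent to $q(X)=p_g(X)+1$, and the implication ``$\chi(X)=0\Rightarrow q(X)>0$'' is immediate, since then $q(X)=p_g(X)+1\geq1$. For the converse it suffices to prove the two statements \textbf{(A)} $\chi(X)\geq0$ and \textbf{(B)} if $\chi(X)>0$ then $q(X)=0$: indeed, if $q(X)>0$ then (B) forces $\chi(X)\leq0$, and together with (A) this gives $\chi(X)=0$.

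For both statements the main tool is restriction to a canonical curve. Assume first $p_g(X)>0$ and fix $C\in|K_X|$, which by Lemma~\ref{lem:canonical-curve} is integral and Gorenstein with $p_a(C)=2$. From $0\to\omega_X^{-1}\to\OO_X\to\OO_C\to0$ together with the vanishing $H^1(\omega_X^{-1})=0$ (Serre dual to $H^1(\OO_X(2K_X))=0$) I get an injection $H^1(\OO_X)\hookrightarrow H^1(\OO_C)$, whence $q(X)\leq p_a(C)=2$; in particular $\chi(X)=1-q+p_g\geq p_g-1\geq0$, which is (A) for $p_g>0$. Analysing the restriction $H^0(\omega_X)\to H^0(\omega_X|_C)$, whose kernel is spanned by the section cutting out $C$ and whose target has dimension $\leq1$ (a degree-$1$ line bundle on an integral genus-$2$ curve), yields $p_g(X)\leq2$. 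When $p_g(X)=0$ there is no canonical curve; here (A) follows from Riemann--Roch and Kodaira-type vanishing, which give $h^0(2K_X)=\chi(\OO_X)+K_X^2=\chi(X)+1$, so $\chi(X)\geq0$ once one knows that $|2K_X|\neq\emptyset$ for stable surfaces (this non-emptiness is the one external input needed for (A)).

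Towards (B) I would next record the adjunction identity $\omega_X^{\otimes2}|_C=\omega_C$ and study the restriction $\rho\colon H^0(\omega_X^{\otimes2})\to H^0(\omega_C)$. From $0\to\omega_X\to\omega_X^{\otimes2}\to\omega_C\to0$ and $H^1(\omega_X^{\otimes2})=0$ one sees that $\operatorname{coker}\rho\cong H^1(\omega_X)$, so $\dim\operatorname{im}\rho=2-q(X)$ by Serre duality. If $p_g(X)\geq2$ the pencil $|K_X|$ contains two distinct members $C\neq C'$, and then $s_{C'}^{2}$ restricts to a nonzero section of $\omega_C$; hence $\operatorname{im}\rho\neq0$ and $q(X)\leq1$. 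Combined with $p_g\leq2$ and $\chi>0$ (i.e.\ $q\leq p_g$), this rules out every case except $q(X)=1$ with $p_g(X)\in\{1,2\}$.

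The main obstacle is precisely the exclusion of these residual cases $q(X)=1$, $p_g(X)\geq1$ (the genuinely hard one being $p_g=q=1$, $\chi=1$, for which the smooth-surface covering trick fails, as noted in the introduction). Formal cohomology is insufficient here: when $q=1$ the relevant comparison map $H^1(\OO_X)\to H^1(\omega_X)$ is an alternating form on a one-dimensional space, hence zero, so no contradiction is visible from the exact sequences alone. I therefore expect the decisive step to be a Castelnuovo--de Franchis/Debarre-type argument: use the nonzero restriction $H^1(\OO_X)\hookrightarrow H^1(\OO_C)$ to produce, on the genus-$2$ curve $C$ (together with its theta-characteristic $\omega_X|_C$), geometric data incompatible with $K_X^2=1$ --- in effect an slc analogue of the inequality $K^2\geq2p_g$ for irregular surfaces. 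Making such an argument work on a possibly non-normal Gorenstein stable surface, where no Albanese map is available a priori and one must stay on the canonical curve (which by Lemma~\ref{lem:canonical-curve} avoids the non-normal locus), is the crux of the proof.
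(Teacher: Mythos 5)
Your reduction is correct as far as it goes (the easy direction, the bound $p_g(X)\le 2$, the injection $H^1(\OO_X)\hookrightarrow H^1(\OO_C)$ coming from $H^1(2K_X)=0$, and the identification $\operatorname{coker}\rho\cong H^1(K_X)$), but the proof stops exactly at the point that matters: the cases $q(X)=1$, $p_g(X)\in\{1,2\}$ are left open, and your guess that closing them requires a Castelnuovo--de Franchis/Albanese-type argument is not how the paper proceeds. The missing idea is to use the hypothesis $q(X)>0$ \emph{constructively}: it makes $\Pic^0(X)$ a positive-dimensional algebraic group, so one can twist by nontrivial $\eta\in\Pic^0(X)$. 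For $\eta\neq 0$, Serre duality gives $h^2(K_X+\eta)=h^0(-\eta)=0$, hence $h^0(K_X+\eta)\ge\chi(K_X+\eta)=\chi(X)>0$; since $(K_X+\eta)\cdot C=1$ and $C$ is integral of genus $2$ by Lemma~\ref{lem:canonical-curve} (note that $p_g=\chi+q-1>0$ under the contradiction hypothesis, so a canonical curve $C$ exists), the unique curve $C_\eta\in|K_X+\eta|$ meets $C$ in a single reduced point $P_\eta$, smooth on $C$. Your own vanishing $H^1(\OO_X(-C))=0$ shows moreover that $\Pic^0(X)\to\Pic^0(C)$ has finite kernel, so $\eta\mapsto P_\eta$ takes infinitely many values.

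Now feed this into the map $\rho\colon H^0(2K_X)\to H^0(K_C)$ that you already set up. Where you restricted only squares $s_{C'}^2$ of canonical sections (which yields $\dim\operatorname{im}\rho\ge 1$, hence only $q\le 1$), the paper restricts the products $s_\eta\cdot s_{-\eta}\in H^0(2K_X)$ of \emph{paracanonical} sections: these cut on $C$ the divisors $P_\eta+P_{-\eta}$, and infinitely many of these divisors are distinct, because a fixed divisor can absorb only finitely many of the infinitely many points $P_\eta$. A one-dimensional image of $\rho$ can cut only a single divisor on $C$, so $\operatorname{im}\rho$ has dimension $2=h^0(K_C)$, i.e.\ $\rho$ is surjective, and your own exact sequence then forces $q(X)=h^1(K_X)=0$ --- the desired contradiction, uniformly in $p_g$, with no residual case analysis. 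Two smaller points: for your step (A) the paper simply quotes $\chi(X)\ge 0$ from \cite[Thm.~3.6]{fpr14}, and your proposed alternative in the case $p_g=0$ is circular, since by vanishing and Riemann--Roch $h^0(2K_X)=\chi(X)+K_X^2$, so for $K_X^2=1$ nonemptiness of $|2K_X|$ is \emph{equivalent} to $\chi(X)\ge 0$, not an independent input.
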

\begin{proof} Clearly a surface with $\chi(X)=0$ is irregular. 
For the other direction, let  $X$ be a Gorenstein stable surface with $K_X^2=1$. 
 We have $\chi(X)\ge 0$ by  \cite[Thm.3.6]{fpr14},  hence it is enough to show that for  $\chi(X)>0$ one has $q(X)=0$. 

So assume by  contradiction that $\chi(X)>0$ and $q(X)>0$.
Since  $h^2(K_X+\eta)=h^0(\eta)=0$ for every $0\ne \eta\in \Pic^0(X)$, it follows that  $h^0(K_X+\eta)\ge \chi(K_X+\eta)=\chi(X)>0$. 
Since $\chi(X)>0$, the linear system $|K_X|$ is nonempty.  Pick $C\in |K_X|$:  since $h^0(\eta)=0$, there is an injection   $H^0(K_X+\eta)\hookrightarrow H^0((K_X+\eta)|_C)$. But $(K_X+\eta)C=1$ and $C$ is irreducible of genus 2 by Lemma \ref{lem:canonical-curve}, so by Riemann-Roch  we have $h^0((K_X+\eta)|_C)= h^0(K_X+\eta)=\chi(X)=1$.  For $\eta\ne 0$, if we denote by $C_{\eta}$ the only curve in $|K_X+\eta|$   then
$C_{\eta}$ intersects $C$ transversely at a point $P_{\eta}$ which is smooth for $C$.

In addition, by the generalized Kodaira vanishing (cf. \cite[Thm. 3.1]{liu-rollenske14}) one has $H^1(\OO_X(-C))=0$, hence $H^1(\OO_X)\to H^1(\OO_C)$ is an injection and  the homomorphism of algebraic groups $\Pic^0(X)\to \Pic^0(C)$ has finite kernel.
 It follows that   the map  ${\eta}\to P_{\eta}$, $\eta\in \Pic^0(C)\setminus \{0\}$,  is not constant.  So  the system $|2K_X|\restr C$ contains the infinitely many divisors $P_{\eta}+P_{-\eta}$. Since $h^0(K_C)=2$, it follows that the  restriction map $H^0( 2K_X)\to H^0(2K_X|_C)=H^0(K_C)$ is surjective. 

By the generalized Kodaira vanishing and Serre duality  we have $H^1(2K_X)=0$ and the  adjunction  sequence  $0\to K_X\to 2K_X\to K_C\to 0$ gives an exact sequence:  
\[H^0(2K_X) \to H^0(K_C)  \to H^1(K_X) \to 0,\]
and therefore $q(X)=h^1(K_X)=0$,  contradicting the assumptions. 
\end{proof}

\begin{cor}\label{cor: 4lines}
Let $X$ be an irregular Gorenstein stable surface with $K^2=1$. Then $X$ is non-normal and the normalisation of $X$ is $(\IP^2, \text{4 general lines})$.
\end{cor}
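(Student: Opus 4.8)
The plan is to read the statement off from Proposition~\ref{prop:irregular} together with the classification of Gorenstein stable surfaces with $K^2=1$ from \cite{fpr14}. The first, purely numerical, step is immediate: since $X$ is irregular we have $q(X)>0$, so Proposition~\ref{prop:irregular} forces $\chi(X)=0$, and hence $q(X)=1+p_g(X)\ge 1$. The conceptual work is then to show that this rules out normality and pins the normalisation down to $(\IP^2,\text{4 general lines})$.

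For non-normality I would argue that a \emph{normal} Gorenstein stable surface with $K_X^2=1$ satisfies $\chi\ge 1$, which contradicts $\chi(X)=0$. Such a surface has only Gorenstein log-canonical singularities, i.e.\ Du Val, simple elliptic or cusp points; for a resolution $f\colon\wt X\to X$ one has $\chi(\OO_X)=\chi(\OO_{\wt X})+\sum_p p_g(X,p)$ with $p_g(X,p)\ge 0$, while $\wt X$ is of general type with $\chi(\OO_{\wt X})\ge 1$, so $\chi(X)\ge 1$ (this is part of the normal case of the classification in \cite{fpr14}). Hence $\bar D\ne\emptyset$ and $\pi\colon\bar X\to X$ is a genuine normalisation, and by Theorem~\ref{thm: triple} and its addendum $(\bar X,\bar D)$ is a log-canonical pair with $K_{\bar X}+\bar D$ ample and Cartier and $(K_{\bar X}+\bar D)^2=K_X^2=1$.

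Next I would identify the pair. The additivity formula \eqref{eq: formula chi} reads $\chi(X)=\chi(\bar X)-\chi(\bar D)+\chi(D)$; writing $\delta=\chi(\bar D)-\chi(D)=\dim_\IC\bigl(\pi_*\OO_{\bar D}/\OO_D\bigr)\ge 0$ for the length of the gluing, this gives $\chi(\bar X)=\chi(X)+\delta=\delta$. Thus $\bar X$ has very small holomorphic Euler characteristic. Among the finitely many pairs $(\bar X,\bar D)$ with $K_{\bar X}+\bar D$ ample Cartier and $(K_{\bar X}+\bar D)^2=1$ produced by the \cite{fpr14} classification, I would then impose the two surviving constraints: $\chi(X)=0$ via \eqref{eq: formula chi}, and $q(X)\ge 1$ via the irregularity algorithm of Proposition~\ref{prop: algorithm q} (applicable once $q(\bar X)=0$, which holds for the relevant candidates), where $q(X)=\dim\ker M-m+1$ is governed by how the components of $\bar D^\nu$ are interchanged by $\tau$ at the degenerate cusps. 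This selects $\bar X=\IP^2$ with $\bar D$ the union of four general lines: here $K_{\IP^2}+\bar D=\OO_{\IP^2}(1)$ so that $(K_{\IP^2}+\bar D)^2=1$, the four lines form a nodal quartic so the pair is log canonical, and their six nodes glue to the degenerate cusps that carry the nontrivial $H^1$ responsible for $q(X)=1$.

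The main obstacle is precisely this enumeration step: one must show that \emph{every} other pair on the \cite{fpr14} list either fails $\chi(X)=0$ or yields $q(X)=0$, so that $(\IP^2,\text{4 lines})$ is the unique surviving configuration. The delicate point is the interplay between the combinatorics of $\bar D$ and the admissible involutions $\tau$, which is what controls both $\delta$ and $\ker M$; once that is under control the remaining verifications are routine bookkeeping through \eqref{eq: formula chi} and Proposition~\ref{prop: algorithm q}.
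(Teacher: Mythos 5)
Your first step matches the paper's: irregularity forces $\chi(X)=0$ by Proposition~\ref{prop:irregular}, and from there both you and the paper lean on \cite{fpr14}. After that, however, there are two genuine gaps. The first is your argument for non-normality: it rests on the claim that a resolution $\wt X$ of a normal Gorenstein stable surface is of general type with $\chi(\OO_{\wt X})\ge 1$. This is false in general. A normal Gorenstein stable surface may have strictly log canonical singularities (simple elliptic or cusp points), and then its minimal resolution can be rational, or ruled over a curve of positive genus, in which case it is not of general type and can even have $\chi(\OO_{\wt X})\le 0$; only the pair $(\wt X, \text{exceptional locus})$ is of log general type. The statement you actually need --- that normal Gorenstein stable surfaces with $K^2=1$ have $\chi\ge 1$, equivalently that $\chi=0$ forces non-normality --- is a nontrivial theorem of \cite{fpr14} (Thm.~3.6), which is precisely what the paper cites; it cannot be recovered by the resolution computation you sketch.

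The second gap is that the identification of the normalisation is exactly the step you leave open (your self-declared ``main obstacle''), so the proposal is a plan rather than a proof, and the plan itself is off in two ways. First, the collection of lc pairs $(\bar X,\bar D)$ with $K_{\bar X}+\bar D$ ample, Cartier and of square $1$ is not finite --- such pairs move in positive-dimensional families --- so ``enumerate the finitely many pairs'' is not meaningful as stated. Second, bringing in $q(X)\ge1$ and Proposition~\ref{prop: algorithm q} is both unnecessary and costly: the algorithm requires a choice of involution $\tau$, so your enumeration would run over pairs \emph{and} involutions, whereas $\chi(X)=0$ alone already suffices. Indeed \cite[Thm.~3.6]{fpr14} states directly that a Gorenstein stable surface with $K^2=1$ and $\chi=0$ is non-normal with normalisation $\IP^2$ and conductor a stable quartic; the paper then excludes all quartics other than four general lines by a one-line computation: by \eqref{eq: formula chi}, $\chi(D)=\chi(X)-\chi(\bar X)+\chi(\bar D)=-3$, which realizes the equality case of the inequality in \cite[Lem.~3.5]{fpr14}, and that equality case forces $\bar D$ to have rational components and six nodes, i.e.\ to be four lines in general position. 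Your write-up never rules out the other stable quartics (irreducible nodal quartics, conic plus two lines, a quartic with a triple point, \dots), and doing so is the actual content of the corollary beyond Proposition~\ref{prop:irregular}.
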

\begin{proof} By Proposition \ref{prop:irregular} we have $\chi(X)=0$. 
 By \cite[Thm. 3.6]{fpr14} $X$ is non-normal and its normalisation is $\IP^2$ with  conductor a stable quartic.  We have $\chi(D)=\chi(X)-\chi(\bar X)+\chi(\bar D)=-3$, e.g., by \eqref{eq: formula chi}. So in this case the inequality \refenum{ii} of \cite[Lem. 3.5]{fpr14} is an equality, and by  ibidem $\bar D$ has rational components and 6 nodes. It follows that $\bar D$ is the union of four lines in general position.   
\end{proof}

\subsection{Interlude: Gorenstein stable surfaces from four lines in the plane}\label{section: 4 lines}
Motivated by Corollary \ref{cor: 4lines}, we classify here  Gorenstein stable surfaces with normalisation $(\IP^2, \text{4 lines})$. 
We take Koll\'ar's approach (cf. Section \ref{ssec: kollar}) to the classification of stable surfaces, as obtained from an lc pair    $(\bar X, \bar D)$ by gluing $\bar X$ along $\bar D$ via an involution  $\tau$ of the normalization $\bar D^{\nu}$ of $\bar D$. 

We take $\bar X=\pp^2$ and $\bar D=L_1+\cdots +L_4$, where $L_1=\{x_0=0\}$, $L_2=\{x_1=0\}$, $L_3=\{x_2=0\}$  and $L_4=\{x_0+x_1+x_2=0\}$,  and classify all the stable Gorenstein surfaces  that arise  from this lc pair; as a byproduct we obtain the complete classification of the stable Gorenstein surfaces with $K^2_X=1$ and $\chi(X)=0$ (Proposition \ref{prop: M_{0,1}}).

Denote by $P_{(ij)}\in \bar D$ the intersection point of $L_i$ and $L_j$. The normalization of $\bar D$ is 
 $\bar D^\nu = \bigsqcup L_i$:   we denote by  $P_{ij}$  the point  of $L_i\subset \bar D^{\nu}$  that maps to $P_{(ij)}$, so that  each component of $\bar D^{\nu}$ contains three marked points.  Recall  (cf. Addendum to Thm. \ref{thm: triple}) that  an involuton $\tau$ of $\bar D^{\nu}$ gives rise to a Gorenstein stable surface if and only if it induces a fixed point free involution of  the marked points. Since every component of $\bar D^{\nu}$ contains three such points, $\tau$ cannot preserve any of the $L_i$,   so we may assume that it maps    $L_1$ to   $L_2$ and $L_3$ to $L_4$.
Then $\tau$ is uniquely determined by two bijections 
\begin{gather*}
\phi_{12}\colon \{ P_{12}, P_{13}, P_{14}\}\to \{P_{21}, P_{23}, P_{24}\},\\
\phi_{34}\colon\{P_{31}, P_{32}, P_{34}\}\to \{P_{41},P_{42}, P_{43}\}.
\end{gather*}

The set of possible choices of $(\phi_{12}, \phi_{34})$ can be identified with $S_3\times S_3$.
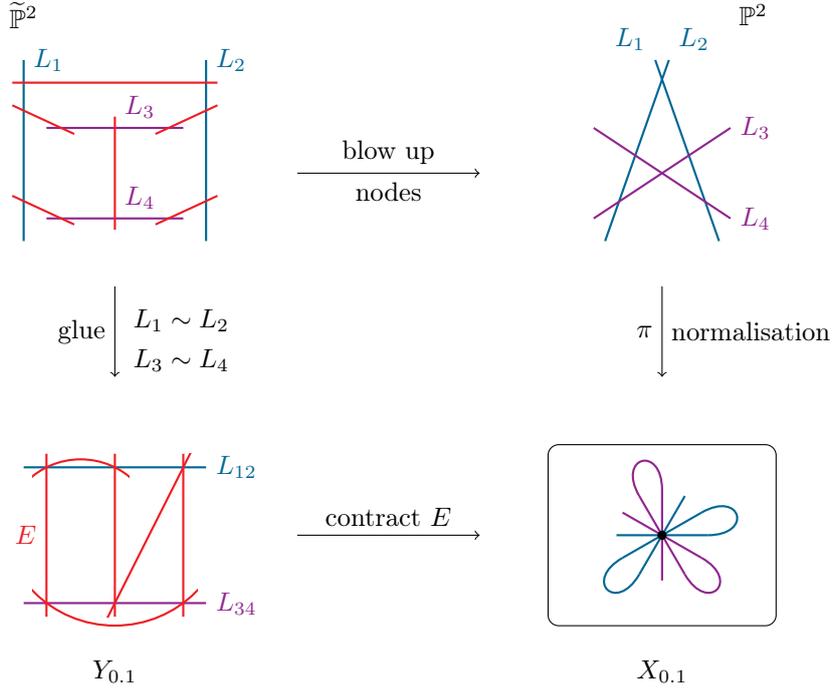
\begin{figure}
\small
 \begin{tikzpicture}
[exceptional/.style = {Red, thick}, 
L12/.style ={thick, MidnightBlue},
L34/.style={thick, Plum},
scale = .6]

\begin{scope}[xshift = 12cm, yshift = 8cm]
\node at (2,2) {$\IP^2$};
 \draw[L12] (-.15, 1) node[above left]{$L_1$} -- (1.25, -3)  (.15, 1)node[above right]{$L_2$} -- (-1.25, -3);
\draw[L34] (-1.5, -2.5) -- (1.5, -.5) node[right]{$L_3$} (-1.5, -.5) -- (1.5, -2.5)node[right]{$L_4$};
\draw[->] (0, -4) to  node[left] {$\pi$} node[right] {\small normalisation} ++(0,-2);
\end{scope}

\begin{scope}[xshift = 0cm, yshift = 8cm]
 \node at (-2,2) {$\tilde \IP^2$};

\begin{scope}
 \draw[L12] (-2,1) node[right]{$L_1$} -- (-2, -3) (2,1)node[right]{$L_2$} -- (2, -3);
\draw[L34] (-1.5, -.5) -- ( 1.5, -.5)  (0,-.5) node [above right] {$L_3$}  (-1.5, -2.5) -- ( 1.5, -2.5) (0,-2.5) node [above right] {$L_4$};
\draw[->] (4, -1.5) to node[above] {\small blow up} node[below] {\small nodes} ++(4,0);
\end{scope}
\draw[exceptional] (-2.25, .5) -- (2.25, .5);
\begin{scope}[exceptional, yshift=.5cm]
\draw (0, -.75) -- (0, -3.25);
\draw (-2.25, -.5)  -- ++(-25:1.5cm);
\draw (-2.25, -2.5)  -- ++(-25:1.5cm);
\draw (2.25, -.5)  -- ++(205:1.5cm);
\draw (2.25, -2.5)  -- ++(205:1.5cm);
\end{scope}
\draw[->] (0, -4) to  node[left] {\small glue} node[right] {\begin{minipage}[c]{1.5cm}
\small\begin{align*} L_1&\sim L_2\\ L_3&\sim L_4\end{align*}\end{minipage}}
 ++(0,-2);
\end{scope}

\begin{scope}
\draw[L12] (-2, 0)-- (2,0) node[right]{$L_{12}$};
\draw[L34] (-2, -3)-- (2,-3) node[right]{$L_{34}$};

\begin{scope}[exceptional]
 \draw (-1.5, .3)to node[left]{$E$} ++(0, -3.6);
 \draw (0, .3)-- ++(0, -3.6);
 \draw (1.5, .3)-- ++(0, -3.6);
\begin{scope}
 \clip  (-1.5, .3) rectangle (2, -3.3);
\draw (-1.5, -6)--(3, 3);
\end{scope}
\begin{scope}
\clip (-1.8, -1) rectangle (1.8, -4);
\node [draw] at (0,-1) [circle through={(-1.5,-3)}] {};
\end{scope}
 \clip (-1.8, .4) rectangle (.3, -1);
\node [draw] at (-.75,-1.5) [circle through={(0,0)}] {};
\end{scope}
\node  at (0, -4.5) {$Y_{0.1}$};
\draw[->] (4, -1.5) to node[above] {\small contract $E$}  ++(4,0);
\end{scope}

\begin{scope}
[xshift = 12cm, yshift =-1.5cm,  looseness=5]
\draw[rounded corners] (-2.5, 2) rectangle (2.5, -2);
\node at (0, -3) {$X_{0.1}$};
 \path (0,0) coordinate(P) ;
\foreach \x in {0,30,60} {\draw[L12] (P) ++(\x:1) -- (\x:-1);}
\foreach \x in {0, 210} {\draw[L12] (P) ++(\x:1) to[out=\x, in =\x+30]  (\x+30:1);}
\foreach \x in {90, 120, 150} {\draw[L34] (P) ++(\x:1) -- (\x:-1);}
\foreach \x in {90, -60} {\draw[L34] (P) ++(\x:1) to[out=\x, in =\x+30]  (\x+30:1);}
\draw[fill=black, radius = 2.5pt] (P)  circle;
\end{scope}
\end{tikzpicture}
\caption{Construction of $X_{0.1}$}\label{fig: X_0.1 construction}
\end{figure}
\begin{exam}
 To visualise what is going on it is helpful to take a log-resolution of $(\bar X, \bar D)$, that is, blow up the intersection points of the four lines and then glue the strict transform of $\bar D$, which is $\bar D^\nu$. On the resulting surface one can  contract the exceptional curves to degenerate cusps.  In technical terms, we  construct the minimal semi-resolution and then pass to the canonical model.

Let us illustrate this procedure in an explicit example: consider the involution given by 
\[\phi_{12}=\begin{pmatrix}
 P_{12} & P_{13} & P_{14}\\
 P_{23} & P_{21} &P_{24}
\end{pmatrix}, \, \phi_{34}=\begin{pmatrix}
 P_{31} & P_{32} & P_{34}\\
 P_{41} & P_{43} &P_{42}
\end{pmatrix}.\]
Glueing the strict transforms of the lines in the blow up of $\IP^2$ gives a semi-smooth surface $Y_{0.1}$; contracting the exceptional curves on $Y_{0.1}$ yields the stable surface $X_{0.1}$. This is illustrated in Figure \ref{fig: X_0.1 construction}.

We see that the non-normal locus $D$ hast two components, each a rational curve with a triple point. At the intersection point of the curves, the surface  $X_{0.1}$ has a degenerate cups $P$, which locally look like the cone over a circle of six independent lines in projective space, that is, locally analytically $(X_{0.1}, P)$ is  isomorphic to the  vertex of the cone in $\IA^6$ given by equations $ \{z_iz_j \mid i-j\neq -1, 0 , 1 \mod 6\}$
Thus blowing up the degenerate cusp $P$ results in  the  semi-resolution $Y_{0.1}$, the exceptional divisor $E$ being a cycle of six smooth rational curves. 

We will study this surface and its cousin $X_{0.2}$ more in detail in the next section.
\end{exam}

\begin{prop}\label{prop:table}
Let $X$ be a Gorenstein stable surface such that the normalization $\bar X$ of $X$ is $\pp^2$ and the double locus $\bar D\subset \bar X$ is the union of four lines.
Then $X$ is isomorphic to one (and only one) of the surfaces corresponding to the involutions listed  in Table \ref{tab: 4lines} on page \pageref{tab: 4lines}.
\end{prop}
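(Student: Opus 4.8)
The plan is to reduce the classification to a finite orbit problem and then enumerate the orbits. By Koll\'ar's correspondence (Theorem~\ref{thm: triple}) together with its Addendum, a Gorenstein stable surface with normalisation $(\IP^2,\bar D)$ is the same datum as an involution $\tau$ of $\bar D^\nu=\bigsqcup_i L_i$ inducing a fixed-point-free involution on the marked points $P_{ij}$. One should first record that for $(\IP^2,\bar D)$ to be an lc pair with $K_{\IP^2}+\bar D=\OO(1)$ ample and Cartier, the four lines must be in general position (no three concurrent); this is the configuration fixed in the text. Since each component $L_i$ carries an odd number ($=3$) of marked points, a fixed-point-free involution cannot preserve any $L_i$, so $\tau$ interchanges the components in two pairs. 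As in the text, after relabelling we may assume the pairing is $\{L_1,L_2\},\{L_3,L_4\}$, whence $\tau$ is encoded by the pair $(\phi_{12},\phi_{34})\in S_3\times S_3$; thus there are $36$ involutions to consider.

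Next I would pin down when two such data give isomorphic surfaces. Because Koll\'ar's correspondence is a bijection, an isomorphism $X\isom X'$ between surfaces with the same normalisation $(\IP^2,\bar D)$ is induced by an automorphism of the pair $(\IP^2,\bar D)$ intertwining the two involutions; any two configurations of four general lines are projectively equivalent, so we may indeed take $\bar D=\bar D'$. Since $\mathrm{PGL}_3(\IC)$ acts simply transitively on ordered projective frames (four points, or dually four lines, in general position), the stabiliser $\Aut(\IP^2,\bar D)$ of the unordered configuration is exactly the symmetric group $S_4$ permuting the lines. Hence isomorphism classes correspond to the orbits of $S_4$, acting by conjugation $\tau\mapsto \psi^\nu\tau(\psi^\nu)\inv$, on the admissible involutions. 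Having fixed the pairing $\{12|34\}$, the relevant symmetries are those $\psi\in S_4$ preserving this partition, namely the subgroup $H=\langle (1\,2),(3\,4),(1\,3)(2\,4)\rangle\subset S_4$, a dihedral group of order $8$; orbits of $S_4$ on all admissible involutions are in bijection with orbits of $H$ on the $36$ pairs $(\phi_{12},\phi_{34})$.

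The core of the argument is then to make the $H$-action on $S_3\times S_3$ explicit and to list its orbits. Each generator acts through its effect on the marked points $P_{ij}\mapsto P_{\psi(i)\psi(j)}$: up to the chosen reference identifications, the transposition $(1\,2)$ acts by inversion on the $\phi_{12}$-factor and by a conjugation on the $\phi_{34}$-factor, symmetrically for $(3\,4)$, while $(1\,3)(2\,4)$ essentially interchanges the two factors. One then enumerates the orbits, conveniently organised by the cycle types of $\phi_{12}$ and $\phi_{34}$ (a fixed-point-free involution, a single transposition, or a $3$-cycle), which already constrain the combinatorics of the glued non-normal locus $D$ and of the degenerate cusps of $X$, and checks that a set of representatives is precisely the list in Table~\ref{tab: 4lines}. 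Since distinct orbits yield non-isomorphic surfaces, the uniqueness clause is then automatic.

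I expect the main obstacle to be the orbit enumeration itself. One must fix conventions for the action with care — the index sets of $\phi_{12}$ and $\phi_{34}$ differ, so identifying them with $S_3$ requires a consistent reference — then verify that all $36$ pairs are accounted for and that the chosen representatives genuinely lie in distinct $H$-orbits; a Burnside count serves as a useful consistency check on the number of classes. The assertion that no further identifications occur rests entirely on the two structural inputs above, the bijectivity of Koll\'ar's correspondence and the computation $\Aut(\IP^2,\bar D)=S_4$, after which the remaining problem is purely finite-combinatorial.
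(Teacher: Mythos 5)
Your proposal follows essentially the same route as the paper's proof: use Koll\'ar's correspondence with its Addendum to reduce to fixed-point-free involutions on the marked points, observe that $\tau$ must swap the four lines in two pairs, identify $\Aut(\IP^2,\bar D)\cong S_4$, and then enumerate the $36$ pairs $(\phi_{12},\phi_{34})\in S_3\times S_3$ up to the action of the order-$8$ dihedral subgroup preserving the pairing, whose stabilisers give $\Aut(X)$ via Lemma~\ref{lem:autXbar} --- exactly the paper's $D_4$-orbit argument. The only divergence is in the final bookkeeping: the paper checks completeness and non-redundancy of the table via the orbit--stabiliser theorem together with non-conjugate stabilisers or differing degenerate-cusp structure (with an explicit computation needed to separate cases 1.2 and 1.3), whereas you propose organising by cycle types with a Burnside count as a consistency check, which amounts to the same finite verification.
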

\begin{proof}
The curve $\bar D$  is nodal since $(\bar X, \bar D)$  is lc, so it consists of four lines in general position and we may assume without  loss of generality  that $D=L_1+\cdots +L_4$ and that $\tau$ interchanges $L_1$ with $L_2$ and $L_3$ with $L_4$.
Every permutation of $L_1,\dots L_4$ is induced by an element of $\Aut(\IP^2)$, so the automorphism group of $(\bar X,\bar D)$ can be identified with $S_4$; our choice of  which lines should be interchanged by 
the involution $\tau$ reduces the symmetry group to $D_4$, generated by the involutions  $(12)$, $(34)$ and $(13)(24)$. The symmetry group $D_4$ acts on these choices by permutation of the indices;  by Lemma \ref{lem:autXbar} the stabiliser is the automorphism group of the corresponding stable surface. 

In Table \ref{tab: 4lines}  we give a representative for each orbit,  together with some information on the stable surface $X$ constructed from the triple $(\bar X, \bar D, \tau)$. 
Given this data one can prove that the list is complete  and without redundancies in the following way: first check that the stabilisers are correct and then use the orbit stabiliser theorem to show that the given orbits fill up $S_3\times S_3$ if they are disjoint. Apart from case 1.2 and 1.3, where an explicit computation is needed, no   two orbits can be equal because either the stabilisers are not conjugate in $D_4$ or the structure of the resulting degenerate cusps is different.

Let $\mu$ be the number of degenerate cusps of $X$. Then by \eqref{eq: formula chi} and \cite[Lem.~3.5]{fpr14} we have
$\chi(X)=\chi(\bar X)-\chi(\bar D)+\chi(D)= \mu-1$.

The irregularity $q(X)$ vanishes if $\chi(\ko_X)\geq 1$ by Proposition \ref{prop:irregular}.  The fact that $q(X)=1$ if $\chi(X)=0$ will be proved below in Proposition \ref{prop: algebraic invariants}.
\end{proof}

\subsection{Irregular Gorenstein stable surfaces with $K^2 = 1$}\label{section: chi=0}
By Corollary \ref{cor: 4lines} and Proposition \ref{prop:table} there are exactly two irregular Gorenstein stable surfaces with $K^2=1$.  The next result takes a moduli perspective on these surfaces.

\begin{prop}\label{prop: M_{0,1}}
The moduli space  $\overline \gothM_{1,0}^{ (Gor)}$ of Gorenstein stable surfaces with $K_X^2 =1$ and $\chi(\ko_X)=0$  consists of two isolated points corresponding to the surfaces  $X_{0.1}$ and $X_{0.2}$  in Table \ref{tab: 4lines} on page \pageref{tab: 4lines}; each point is  a connected component of the moduli space of stable surfaces. 

Moreover, $\overline \gothM_{1,0}^{ (Gor)}$ coincides with the moduli space of {\em irregular} Gorenstein stable surfaces with $K^2 = 1$. 
\end{prop}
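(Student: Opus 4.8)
The plan is to assemble the statement from results that are already in place, treating the three assertions separately. First I would identify the two surfaces: combining Corollary \ref{cor: 4lines} with Proposition \ref{prop:table}, every irregular Gorenstein stable surface with $K^2=1$ has normalisation $(\IP^2,\text{4 general lines})$, so it appears in Table \ref{tab: 4lines}; and by the computation $\chi(X)=\mu-1$ from the proof of Proposition \ref{prop:table} together with Proposition \ref{prop:irregular}, the condition $\chi(\ko_X)=0$ singles out exactly those entries with a single degenerate cusp, namely $X_{0.1}$ and $X_{0.2}$. This simultaneously proves the last sentence of the proposition, that $\overline\gothM_{1,0}^{(Gor)}$ coincides with the moduli space of irregular Gorenstein stable surfaces with $K^2=1$, since $q(X)>0 \Leftrightarrow \chi(X)=0$ by Proposition \ref{prop:irregular}, and $\chi\ge 0$ always.

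Next I would argue that each surface is an \emph{isolated} point of $\overline\gothM_{1,0}^{(Gor)}$. The Koll\'ar correspondence of Theorem \ref{thm: triple} reduces the deformation theory to deformations of the triple $(\bar X,\bar D,\tau)=(\IP^2,\text{4 lines},\tau)$. Here $\bar X=\IP^2$ is rigid, the pair $(\IP^2, \text{4 general lines})$ has no moduli (any four lines in general position are projectively equivalent, as used in the proof of Proposition \ref{prop:table}), and the gluing involution $\tau$ is a discrete datum (an element of $S_3\times S_3$). Thus there are no infinitesimal deformations preserving the Gorenstein-locus structure, so the corresponding point of $\overline\gothM_{1,0}^{(Gor)}$ is isolated.

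The subtler claim is that each of these points is a connected component of the \emph{full} moduli space of stable surfaces, i.e.\ that these surfaces do not deform to non-Gorenstein stable surfaces with $K^2=1$ and $\chi=0$. For this I would again use the Koll\'ar correspondence, now for arbitrary (not necessarily Gorenstein) stable surfaces: a stable surface with $K^2=1$ and $\chi=0$ is, by the classification invoked in Corollary \ref{cor: 4lines} via \cite{fpr14}, built from $(\IP^2,\text{stable quartic})$, and the normalisation is rigid. One must check that the deformation space of the whole triple, allowing the conductor $\bar D$ and the involution to degenerate, is still zero-dimensional near $X_{0.1}$ and $X_{0.2}$, so that no nearby stable surface exists. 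This is the main obstacle, since it requires controlling \emph{all} stable (including non-Gorenstein) deformations rather than just the Gorenstein ones; I expect one resolves it by noting that deforming $\bar D$ within quartics while keeping it a stable curve admitting a suitable $\tau$-invariant different forces $\bar D$ to remain four general lines, and that any local-to-global obstruction calculation on $(\IP^2,\bar D)$ shows the deformation functor of the triple is unobstructed and $0$-dimensional. Once this local rigidity is established, properness of the moduli space \cite{kollarModuli} upgrades ``isolated point'' to ``connected component''.
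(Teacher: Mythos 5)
Your set-theoretic identification of $\overline\gothM_{1,0}^{(Gor)}$ (Corollary \ref{cor: 4lines} plus Proposition \ref{prop:table}, with $\chi(X)=\mu-1$ picking out $X_{0.1}$ and $X_{0.2}$) and your derivation of the final clause from Proposition \ref{prop:irregular} are correct and coincide with the paper's argument. The gap is in the component/isolatedness claim. You propose to control deformations of $X_{0.1}$, $X_{0.2}$ by deformations of the triple $(\IP^2,\text{4 lines},\tau)$ via Theorem \ref{thm: triple}; but that theorem is only a bijection between isomorphism classes of individual surfaces and individual triples --- it is not an equivalence of deformation functors and is not compatible with families. A flat family of stable surfaces with non-normal special fibre need not induce any family of normalisations, conductors and involutions: non-normal stable surfaces routinely deform to normal or smooth ones (this is the whole point of the stable compactification) while the triple does not deform at all. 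Hence rigidity of $(\IP^2,\text{4 lines},\tau)$ does not, by itself, forbid nearby points in the moduli space, and your proposed treatment of non-Gorenstein deformations inherits the same defect. Worse, the classification you invoke there --- normalisation $(\IP^2,\text{stable quartic})$ for \emph{any} stable surface with $K^2=1$, $\chi=0$ --- is not available: the result of \cite{fpr14} used in Corollary \ref{cor: 4lines} assumes $X$ Gorenstein, and the paper explicitly states that it is unknown whether irregular normal (necessarily non-Gorenstein) stable surfaces with $K^2=1$ exist.

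The paper's own argument for this step is one line and sidesteps all of the above: the Gorenstein condition is \emph{open} in flat families (\cite[Cor.~3.3.15]{Bruns-Herzog}), and $K^2$ and $\chi(\ko_X)$ are locally constant in families, so $\overline\gothM_{1,0}^{(Gor)}$ is an open subset of the moduli space of stable surfaces; since by the set-theoretic description it is a finite union of closed points, each of the two points is open and closed, i.e.\ a connected component. Note that openness of the Gorenstein locus is precisely what excludes deformations to non-Gorenstein stable surfaces --- the case your triple-based approach cannot reach --- so any repair of your argument would need an ingredient of this kind rather than more deformation theory of the normalised data.
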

We do not know whether there are any irregular normal stable surfaces with $K^2=1$; of course,  such  surfaces would  not be Gorenstein by Proposition  \ref{prop:irregular}. 
\begin{proof}  The last part is Proposition \ref{prop:irregular}. 
The set-theoretical description of $\overline \gothM_{1,0}^{ (Gor)}$ is just Corollary \ref{cor: 4lines} together with Proposition \ref{prop:table}. It remains to show that the subspace $\overline \gothM_{1,0}^{ (Gor)}$  is a union of connected components of the moduli space of stable surfaces;  this follows from the fact that the Gorenstein condition is open in families \cite[Cor.~3.3.15]{Bruns-Herzog}.
\end{proof}
In the remaining part  of this section we apply the techniques of Section \ref{section: computations} to compute the integral homology, fundamental group, irregularity and the Picard group of the two surfaces $X_{0.1}$ and $X_{0.2}$. 

\begin{prop}\label{prop; topological invariants}
The surfaces $X_{0.1} $ and $X_{0.2}$ have the following topological invariants:
\begin{enumerate}
\item $\pi_1(X_{0.1})\isom \langle A,B \ | \ A^{-1}B^{-1}A^2B^{2}\rangle$ and 
 $\pi_1(X_{0.2})\isom \langle A,B \ | \ AB^{-1}A^2B^{2}\rangle$
 and these two groups are not isomorphic. 
\item 
 $ H_i(X_{0.1}, \IZ) =H_i(X_{0.2}, \IZ) =\begin{cases}
                   \IZ & i = 0,1, 2, 4\\
\IZ^2& i = 3
                  \end{cases}
$
\end{enumerate}
\end{prop}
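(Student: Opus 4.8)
The plan is to feed the normalisation $\pi\colon \bar X = \IP^2 \to X$ into Corollary~\ref{cor: mayer vietoris and fundamental group as amalgamated product}, so the first task is to identify the homotopy types of the three spaces in Diagram~\eqref{diagr: pushout} and the maps between them. Here $\bar X = \IP^2$ is simply connected with $H_*(\IP^2,\IZ) = \IZ,0,\IZ,0,\IZ$. The curve $\bar D = L_1+\dots+L_4$ is four copies of $\IP^1\cong S^2$ glued at the six nodes according to the complete graph on four vertices; since the spheres are simply connected, $\bar D$ is homotopy equivalent to a wedge of four $2$-spheres and $3$ circles, so $\pi_1(\bar D)\isom F_3$ is free of rank $3$ and $H_*(\bar D,\IZ)=\IZ,\IZ^3,\IZ^4,0,0$. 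For both surfaces $D$ has two rational components $L_{12}$ and $L_{34}$ whose six preimage points (three on each) are all glued to the single degenerate cusp $P$; hence $D$ is homotopy equivalent to a wedge of two $2$-spheres and $4$ circles (consistent with $\chi_{\mathrm{top}}(D)=4-5=-1$), giving $\pi_1(D)\isom F_4$ and $H_*(D,\IZ)=\IZ,\IZ^4,\IZ^2,0,0$.

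For the fundamental group I would use part~(2) of the Corollary: as $\bar D$ is connected and $\pi_1(\bar X)=1$,
\[\pi_1(X)\isom \pi_1(D)\star_{\pi_1(\bar D)}1 \isom \pi_1(D)/\langle\langle \operatorname{im}(\pi_*)\rangle\rangle,\]
the quotient of $F_4$ by the normal closure of the images of the three free generators of $\pi_1(\bar D)$. Choosing the generators of $\pi_1(\bar D)$ and $\pi_1(D)$ as explicit loops in the two graphs of spheres, the map $\pi_*$ is read off from the gluing combinatorics encoded by $\tau=(\phi_{12},\phi_{34})$ for $X_{0.1}$ and $X_{0.2}$ respectively. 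This produces a presentation on four generators and three relators; two Tietze moves should eliminate two of the generators and collapse it to the two claimed one-relator presentations. Tracking the loops correctly through $\pi_*$ and carrying out this simplification is the most laborious bookkeeping of the argument.

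To see that the two groups are genuinely different --- the real content of~(1), and the point for which we are indebted to K.-U.~Bux --- note first that both abelianise to $\IZ$ (the exponent-sum vectors of the relators are $(1,1)$ and $(3,1)$, both primitive), so $H_1$ does not separate them. I would instead use the Alexander polynomial: for a two-generator one-relator group with infinite cyclic abelianisation it is the greatest common divisor of the two Fox derivatives $\partial r/\partial A,\ \partial r/\partial B$ pushed into $\IZ[t,t^{-1}]$ along the abelianisation, and it is an isomorphism invariant up to units $\pm t^k$. A short Fox-calculus computation gives $t^2+t-1$ for $X_{0.1}$ and $t^3-t+1$ for $X_{0.2}$; these have different degrees and are not related by $t\mapsto t^{-1}$ or by a unit, so the groups are not isomorphic. (One could also separate them by counting homomorphisms to a suitable finite group.)

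For the homology I would run the Mayer--Vietoris sequence of part~(1). In degree~$4$ it gives $H_4(X)\isom H_4(\bar X)=\IZ$, and $H_0(X)=\IZ$. In degree~$3$ it reads $0\to H_3(X)\to H_2(\bar D)\xrightarrow{f}H_2(D)\oplus H_2(\bar X)$, where $f=(\pi_*,\bar\iota_*)$ sends $[L_1],[L_2]\mapsto[L_{12}]$, $[L_3],[L_4]\mapsto[L_{34}]$ and every $[L_i]\mapsto h$; its kernel is $\{(a,-a,c,-c)\}\isom\IZ^2$, so $H_3(X)=\IZ^2$. The abelianisation of $\pi_1(X)$ already gives $H_1(X)=\IZ$; comparing with the degree-$1$ part of Mayer--Vietoris, namely $H_1(X)=\operatorname{coker}(\pi_*\colon H_1(\bar D)\to H_1(D))$, forces $\pi_*$ to be injective on $H_1$ (otherwise the cokernel would have rank $\ge 2$). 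The degree-$2$ part then becomes $0\to\operatorname{coker}(f)\to H_2(X)\to\ker(\pi_*|_{H_1})=0$, and since $\operatorname{coker}(f)=\IZ^3/\langle(1,0,1),(0,1,1)\rangle\isom\IZ$ we get $H_2(X)=\IZ$. As every ingredient here ($\IP^2$, the four lines, the homology of $D$, the map $f$ and the injectivity of $\pi_*$ on $H_1$) is identical for $X_{0.1}$ and $X_{0.2}$, the integral homology coincides for the two surfaces although their fundamental groups do not; the genuine obstacle in the whole proof is thus the non-isomorphism of the two one-relator groups, which is precisely why an invariant finer than $H_1$ is required.
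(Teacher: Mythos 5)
Your proposal follows the same skeleton as the paper's proof: the identical homotopy models for $\bar D$ (wedge of four spheres and three circles) and $D$ (wedge of two spheres and four circles), the same reduction $\pi_1(X)\isom \pi_1(D)\star_{\pi_1(\bar D)}1$ via Corollary~\ref{cor: mayer vietoris and fundamental group as amalgamated product}, and the same Mayer--Vietoris computation --- your map $f$ is the paper's matrix $N$, with the same kernel $\IZ^2$ and cokernel $\IZ$. It differs in two worthwhile ways. First, for the key claim that the two one-relator groups are not isomorphic, the paper exhibits a surjection of $\pi_1(X_{0.1})$ onto $A_4$ (via $A\mapsto(234)$, $B\mapsto(123)$) and rules out any such surjection for $\pi_1(X_{0.2})$ by a case analysis; you instead compute Alexander polynomials by Fox calculus, and your values are correct: for $X_{0.1}$ the abelianised derivatives are $t+1-t^{-1}$ and $t^2+t-1$, giving $t^2+t-1$, while for $X_{0.2}$ they are $t^5+t^4+1=(t^2+t+1)(t^3-t+1)$ and $t^3(t^3-t+1)$, and since $t^3$ is a unit in $\IZ[t,t^{-1}]$ the gcd is $t^3-t+1$. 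Because both groups have abelianisation exactly $\IZ$, these polynomials are group invariants up to $\pm t^k$ and $t\mapsto t^{-1}$, so the differing spans ($2$ versus $3$) do separate the groups; this gives a mechanical, computable invariant, whereas the paper's finite-quotient argument is more elementary but ad hoc. Second, in the homology computation you obtain injectivity of $\pi_*$ on $H_1$ by a rank count from $H_1(X)=\pi_1(X)^{\mathrm{ab}}\isom\IZ$, rather than from the paper's explicit matrices $M_1,M_2$ (each of which has a unimodular $3\times 3$ minor); both are valid and rest on the same input. One caveat: the step you defer as ``laborious bookkeeping'' --- reading off the images of the three generators of $\pi_1(\bar D)\isom F_3$ inside $\pi_1(D)\isom F_4$ from the gluing data $(\phi_{12},\phi_{34})$ and Tietze-reducing to the two stated presentations --- is precisely the content of the paper's relations \eqref{eq: relation X_0.1} and \eqref{eq: relation X_0.2} and of Figures~\ref{fig: X0.1} and~\ref{fig: X0.2}, and it is the only place where $X_{0.1}$ and $X_{0.2}$ actually differ; your proof is therefore complete only modulo that computation, which the paper carries out and which does yield the claimed relators.
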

The fact that the two groups in \refenum{i} are not isomorphic was explained to us by Kai-Uwe Bux.

\begin{proof}
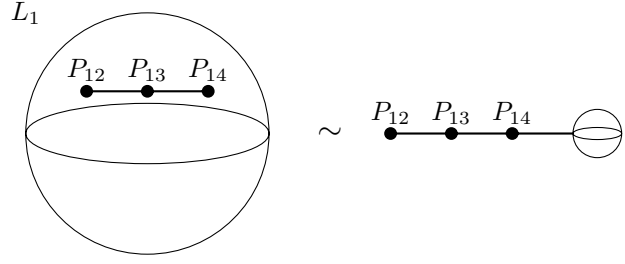
\begin{figure}[t]
\small
 \begin{tikzpicture}
[sphere/.style = {thin},
graph/.style = {thick, fill=black, radius = 2.5pt}, 
scale = .8]
\begin{scope}
\node at (-2,2) {$L_1$};
\draw[sphere] (0,0) circle [radius = 2] circle [x radius=2cm, y radius=5mm];
\draw[graph] (-1,.7) circle node[above] {$P_{12}$}--++ (1,0) circle node[above] {$P_{13}$}--++ (1,0) circle  node[above] {$P_{14}$}; 
 \node at (3,0) {\large$\sim$};
\draw[graph] (4,0) circle node[above] {$P_{12}$}--++ (1,0) circle node[above] {$P_{13}$}--++ (1,0) circle  node[above] {$P_{14}$}--++(1,0); 
\draw[sphere] (7.4,0) circle [radius = .4] circle [x radius=.4cm, y radius=.1cm];
\end{scope}
\end{tikzpicture}
\caption{Homotopically equivalent model of $L_1$ with three marked points}\label{fig: line + points}
\end{figure}
 We continue to use the notation from \eqref{diagr: pushout}. 
In order to make explicit  computations,  we choose a homotopy-equivalent model for $\pi\colon \bar D\to D$: each line $L_i$ is topologically  a sphere with three marked points, the intersection points. Choosing an order for these three points,  this space is homotopy equivalent to the 1-point-union of  an interval  with three marked points and a sphere as in Figure \ref{fig: line + points}. Note that this and the following are \emph{real} pictures. Doing this for all four lines we may choose the order of the points so that the action of $\tau$ on $\bar D^\nu$ is compatible with our model, as in Figures \ref{fig: bar Dnu 0.1} and \ref{fig: bar Dnu 0.2}.
Glueing the four components back together we get a model for $\bar D$, while first taking the quotient by the involution and then glueing gives a model for $D$; this, together with  the map $\pi$,  is  shown in Figure \ref{fig: pi 0.1} for $X_{0.1}$ and   in Figure \ref{fig: pi 0.2} for $X_{0.2}$. 
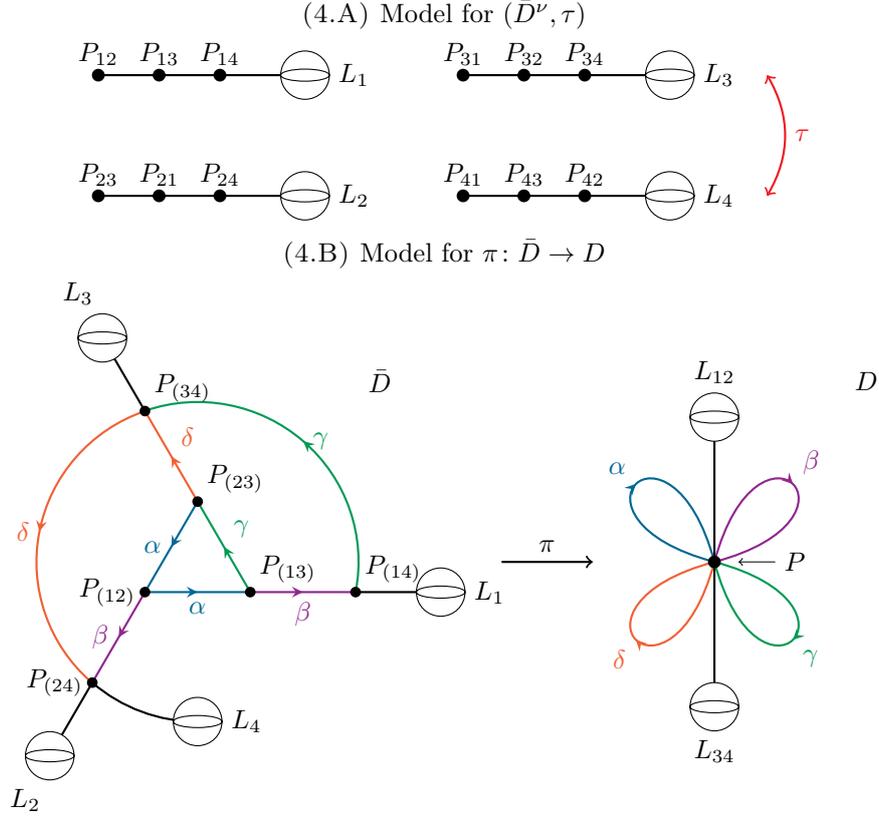
\begin{figure}[t]
\small
\begin{subfigure}{\textwidth}
\centering
\caption{Model for $(\bar D^\nu, \tau)$}\label{fig: bar Dnu 0.1}
 \begin{tikzpicture}
[sphere/.style = {thin},
graph/.style = {thick, fill=black, radius = 2.5pt}, 
scale = .8]
\begin{scope}
\draw[graph] (-2,0) circle node[above] {$P_{12}$}--++ (1,0) circle node[above] {$P_{13}$}--++ (1,0) circle  node[above] {$P_{14}$}--++(1,0); 
\draw[sphere] (1.4,0) circle [radius = .4] circle [x radius=.4cm, y radius=.1cm]++(.4,0) node[right] {$L_1$};

\draw[graph] (-2,-2) circle node[above] {$P_{23}$}--++ (1,0) circle node[above] {$P_{21}$}--++ (1,0) circle  node[above] {$P_{24}$}--++(1,0); 
\draw[sphere] (1.4,-2) circle [radius = .4] circle [x radius=.4cm, y radius=.1cm] ++(.4,0) node[right] {$L_2$};;
 
\draw[graph] (4,0) circle node[above] {$P_{31}$}--++ (1,0) circle node[above] {$P_{32}$}--++ (1,0) circle  node[above] {$P_{34}$}--++(1,0); 
\draw[sphere] (7.4,0) circle [radius = .4] circle [x radius=.4cm, y radius=.1cm]++(.4,0) node[right] {$L_3$};
 
\draw[graph] (4,-2) circle node[above] {$P_{41}$}--++ (1,0) circle node[above] {$P_{43}$}--++ (1,0) circle  node[above] {$P_{42}$}--++(1,0); 
\draw[sphere] (7.4,-2) circle [radius = .4] circle [x radius=.4cm, y radius=.1cm]++(.4,0) node[right] {$L_4$};
\draw[thick, <->, bend left, Red] (9,0) to node[right]{$\tau$} (9,-2);
 \end{scope}
\end{tikzpicture}
        \end{subfigure}

        \begin{subfigure}{\textwidth}
\caption{Model for $\pi\colon \bar D\to D$ }\label{fig: pi 0.1}
\small\centering

\begin{tikzpicture}[sphere/.style = {thin},
graph/.style = {thick},
alpha/.style = {MidnightBlue, thick, postaction={mid arrow}},
beta/.style = {Plum,thick, postaction={mid arrow}},
gamma/.style = {ForestGreen, thick, postaction={mid arrow}},
delta/.style = {RedOrange, thick, postaction={mid arrow}},
scale = .8]

\begin{scope}
\path
(0,0)++(210:1)  coordinate(P12) ++(-120:1.73205080757) coordinate (P24)
(0,0)++(330:1) coordinate(P13) ++(0:1.73205080757) coordinate (P14)
(0,0)++(90:1) coordinate(P23) ++(120:1.73205080757) coordinate (P34);

\draw[sphere] (P14) ++(1.4,0) circle [radius = .4] circle [x radius=.4cm, y radius=.1cm]++(.4,0) node[right] {$L_1$}; \draw[thick] (P14) --++(1,0);

\draw[thick] (P24) --  ++ (240:1) ;
\draw[sphere] (P24)  ++ (240:1.4) circle [radius = .4] circle [x radius=.4cm, y radius=.1cm]++(0,-.4) node[below left] {$L_2$};

\draw[thick] (P34) --  ++ (120:1) ;
\draw[sphere] (P34)  ++ (120:1.4) circle [radius = .4] circle [x radius=.4cm, y radius=.1cm]++(0,0.4) node[above left] {$L_3$};

\draw[thick]  (P24)  arc(229:270: 2.64575131);
  \draw[sphere, fill=white] (0,0)++(270:2.64575131) circle [ radius = .4] circle [x radius=.4cm, y radius=.1cm]++(.4,0) node[ right] {$L_4$};

\draw[alpha] (P12) to node[below]{$\alpha$} (P13);
\draw[beta] (P13) to node[below]{$\beta$} (P14);
\draw[alpha] (P23)to node[left]{$\alpha$}(P12);
\draw[beta] (P12)to node[left]{$\beta$}(P24);
\draw[gamma] (P13)to node[above right]{$\gamma$}(P23);
\draw[delta] (P23)to node[above right]{$\delta$}(P34);
\draw[gamma] (P14)  arc  (-11:49:2.64575131) node[right ]{$\gamma$} arc  (49:109:2.64575131);
\draw[delta] (P34) arc(109:169:2.64575131)node[left]{$\delta$}  arc(169:229:2.64575131);

  \path[fill=black, radius = 2.5pt]
 (P34) circle   node[above right] {$P_{(34)}$}
(P24) circle  node[  left] {$P_{(24)}$}
(P13) circle node[above right] {$P_{(13)}$}
(P12) circle node[ left] {$P_{(12)}$}
(P14) circle node[above right] {$P_{(14)}$}
(P23) circle node[above right] {$P_{(23)}$};
\draw[thick, ->] (5,0) to node[above]{$\pi$} ++(1.5,0);
\node at  (3,3) {$\bar D$};
\end{scope}

\begin{scope}[xshift = 8.5cm, thick, every loop/.style={looseness=40, min distance=80}]
\draw (0,-2) -- (0,2);
\draw[sphere] (0,-2.4) circle [radius = .4] circle [x radius=.4cm, y radius=.1cm]++(0,-.4) node[below] {$L_{34}$};
\draw[sphere] (0,2.4) circle [radius = .4] circle [x radius=.4cm, y radius=.1cm]++(0,.4) node[above] {$L_{12}$};

\path (0,0) coordinate(P) ;\draw[->, very thin] (1,0) node[right] {$P$} to (.4,0);
 \draw[gamma] (0,0) to[in=-75, out =-15,loop]node[below right] {$\gamma$} ();
 \draw[beta] (0,0) to[in=15, out =75,loop] node[above right]{$\beta$} ();
 \draw[alpha] (0,0) to[out=165, in =105,loop] node[above left] {$\alpha$} ();
 \draw[delta] (0,0) to[out=-105, in =-165,loop] node[below left]{$\delta$}();
\draw[fill=black, radius = 2.5pt]   circle;
\node at (2.5,3) {$D$};
\end{scope}
\end{tikzpicture}
        \end{subfigure}
        \caption{Homotopy-equivalent models in case $X_{0.1}$}\label{fig: X0.1}
\end{figure}
\begin{figure}[t]
\small
\begin{subfigure}{\textwidth}
\centering
\caption{Model for $(\bar D^\nu, \tau)$}\label{fig: bar Dnu 0.2}
 \begin{tikzpicture}
[sphere/.style = {thin},
graph/.style = {thick, fill=black, radius = 2.5pt}, 
scale = 1]
\begin{scope}
\draw[graph] (-2,0) circle node[above] {$P_{12}$}--++ (1,0) circle node[above] {$P_{13}$}--++ (1,0) circle  node[above] {$P_{14}$}--++(1,0); 
\draw[sphere] (1.4,0) circle [radius = .4] circle [x radius=.4cm, y radius=.1cm]++(.4,0) node[right] {$L_1$};

\draw[graph] (-2,-2) circle node[above] {$P_{23}$}--++ (1,0) circle node[above] {$P_{24}$}--++ (1,0) circle  node[above] {$P_{21}$}--++(1,0); 
\draw[sphere] (1.4,-2) circle [radius = .4] circle [x radius=.4cm, y radius=.1cm] ++(.4,0) node[right] {$L_2$};;
 
\draw[graph] (4,0) circle node[above] {$P_{31}$}--++ (1,0) circle node[above] {$P_{32}$}--++ (1,0) circle  node[above] {$P_{34}$}--++(1,0); 
\draw[sphere] (7.4,0) circle [radius = .4] circle [x radius=.4cm, y radius=.1cm]++(.4,0) node[right] {$L_3$};
 
\draw[graph] (4,-2) circle node[above] {$P_{41}$}--++ (1,0) circle node[above] {$P_{43}$}--++ (1,0) circle  node[above] {$P_{42}$}--++(1,0); 
\draw[sphere] (7.4,-2) circle [radius = .4] circle [x radius=.4cm, y radius=.1cm]++(.4,0) node[right] {$L_4$};
\draw[thick, <->, bend left, Red] (9,0) to node[right]{$\tau$} (9,-2);
 \end{scope}
\end{tikzpicture}
 \end{subfigure}

 \begin{subfigure}{\textwidth}
\caption{Model for $\pi\colon \bar D\to D$ }\label{fig: pi 0.2}
\centering
\begin{tikzpicture}[sphere/.style = {thin},
graph/.style = {thick},
alpha/.style = {MidnightBlue, thick, postaction={mid arrow}},
beta/.style = {Plum,thick, postaction={mid arrow}},
gamma/.style = {ForestGreen, thick, postaction={mid arrow}},
delta/.style = {RedOrange, thick, postaction={mid arrow}},
scale = .8]

\begin{scope}
\path
(-4,0)  coordinate(P12)
(-2,0) coordinate(P24)
(0,0) coordinate(P23)
(0,2) coordinate(P34)
(0,-2) coordinate(P13)
(2,-2) coordinate(P14);

\draw[sphere] (P14) ++(2,0) circle [radius = .4] circle [x radius=.4cm, y radius=.1cm]++(.4,0) node[right] {$L_1$};
\draw[thick] (P14) --++(1.6,0);

\draw[sphere] (P12) ++(-1.4,0) circle [radius = .4] circle [x radius=.4cm, y radius=.1cm]++(-.4,0) node[left] {$L_2$};
\draw[thick] (P12) --++(-1,0);

\draw[sphere] (P24) ++(0,-1) circle [radius = .4] circle [x radius=.4cm, y radius=.1cm]++(.4,0) node[right] {$L_4$};
\draw[thick] (P24) --++(0,-.6);

\draw[sphere] (P34) ++(0,1.4) circle [radius = .4] circle [x radius=.4cm, y radius=.1cm]++(0,.4) node[above] {$L_3$};
\draw[thick] (P34) --++(0,1);

\draw[beta] (P13) to node[below]{$\beta$} (P14);
\draw[alpha] (P12)  arc(-180:-90:2cm) node[below]{$\alpha$} to ++(2,0);
\draw[alpha] (P23)to node[below]{$\alpha$}(P24);
\draw[beta] (P24)to node[below]{$\beta$}(P12);
\draw[gamma] (P13)to node[right]{$\gamma$}(P23);
\draw[delta] (P23)to node[right]{$\delta$}(P34);
\draw[gamma] (P14) --(2,0)  node[right]{$\gamma$} arc(0:90:2cm) ;
\draw[delta] (P34) arc (90:135:2cm) node[above left]{$\delta$} arc (135:180:2cm);

  \path[fill=black, radius = 2.5pt]
 (P34) circle   node[above right] {$P_{(34)}$}
(P24) circle  node[above right] {$P_{(24)}$}
(P13) circle node[above right] {$P_{(13)}$}
(P12) circle node[above] {$P_{(12)}$}
(P14) circle node[above right] {$P_{(14)}$}
(P23) circle node[above right] {$P_{(23)}$};
\draw[thick, ->] (4.25,0) to node[above]{$\pi$} ++(1.5,0);
\node at  (3,3) {$\bar D$};
\end{scope}

\begin{scope}[xshift = 7.5cm, thick, every loop/.style={looseness=40, min distance=80}]
\draw (0,-2) -- (0,2);
\draw[sphere] (0,-2.4) circle [radius = .4] circle [x radius=.4cm, y radius=.1cm]++(0,-.4) node[below] {$L_{34}$};
\draw[sphere] (0,2.4) circle [radius = .4] circle [x radius=.4cm, y radius=.1cm]++(0,.4) node[above] {$L_{12}$};

\path (0,0) coordinate(P) ;\draw[->, very thin] (1,0) node[right] {$P$} to (.4,0);
 \draw[gamma] (0,0) to[in=-75, out =-15,loop]node[below right] {$\gamma$} ();
 \draw[beta] (0,0) to[in=15, out =75,loop] node[above right]{$\beta$} ();
 \draw[alpha] (0,0) to[out=165, in =105,loop] node[above left] {$\alpha$} ();
 \draw[delta] (0,0) to[out=-105, in =-165,loop] node[below left]{$\delta$}();
\draw[fill=black, radius = 2.5pt]   circle;
\node at (2.5,3) {$D$};
\end{scope}
\end{tikzpicture}
\end{subfigure}
\caption{Homotopy-equivalent models in case $X_{0.2}$}\label{fig: X0.2}
\end{figure}
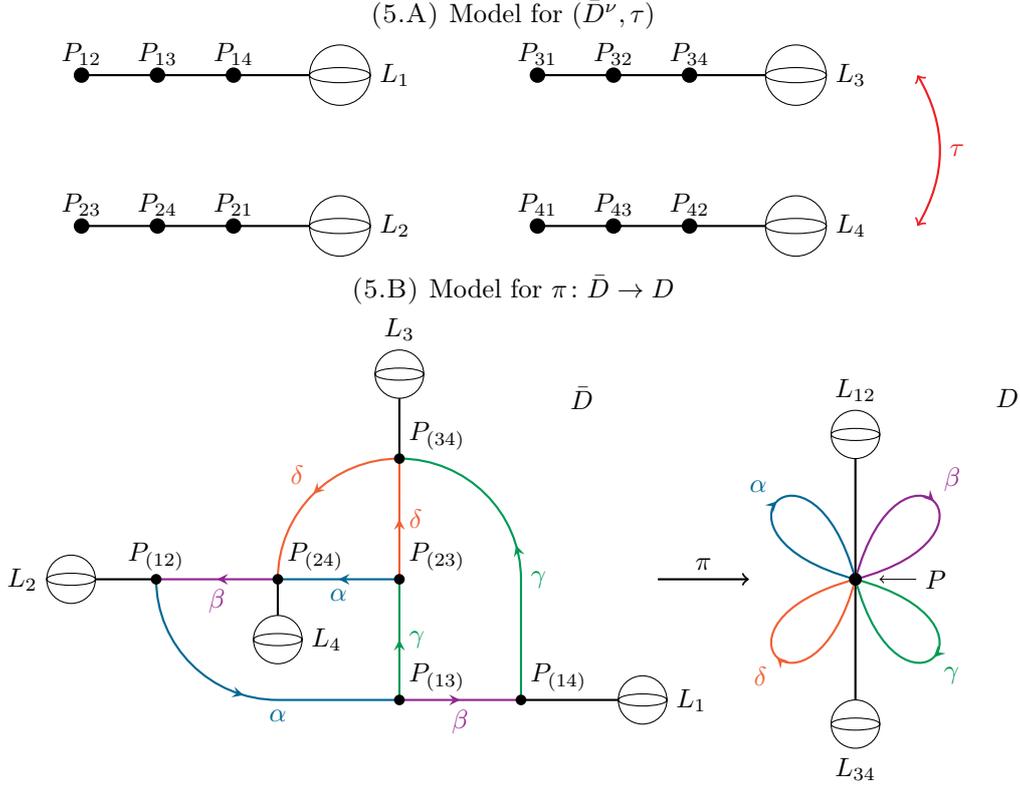

\refenum{i}
 By Corollary \ref{cor: mayer vietoris and fundamental group as amalgamated product} we have 
\[ \pi_1(X_{0.1})= \pi_1( D)\star_{\pi_1(\bar D)}\pi_1(\IP^2)
\isom \frac{\pi_1( D)}{\langle \pi_1(\bar D)\rangle}\] 
We choose as base points $P\in D$ and $P_{(23)}\in \bar D$ (cf. Figures \ref{fig: pi 0.1} and \ref{fig: pi 0.2}).
We read off  our model that $\pi_1(D, P)$ is the free group generated by the loops $\alpha, \beta, \gamma, \delta$ depicted in Figure  \ref{fig: pi 0.1} in the case of $X_{0.1}$ and in Figure  \ref{fig: pi 0.2} in the case of $X_{0.2}$.

If $X=X_{0.1}$,  then  the image of $\pi_1(\bar D, P_{(23)})$ is generated by 
\begin{equation}\label{eq: relation X_0.1}
 \alpha^2\gamma ,  \delta\inverse\gamma \inverse\beta  \gamma, \alpha\beta\delta^{-2} .
 \end{equation}
Solving for $\gamma=\alpha^{-2}$ and $\delta=\gamma^{-1}\beta\gamma$, one sees that the quotient group is generated by (the classes of ) $\alpha$ and $\beta$ with the relation: 
\[\alpha \beta \alpha^2\beta^{-2}\alpha^{-2}.\]
Denoting by $A$ the class of $\alpha$ and by $B$ the class of $\alpha^{-2}\beta^{-1}\alpha^2$ one gets the presentation given in the statement. 

For $X_{0.2}$ the image of $\pi_1(\bar D, P_{(23)})$ is generated by:
\begin{equation}\label{eq: relation X_0.2}
\delta^2\alpha^{-1}, \alpha\beta\alpha\gamma, \delta \gamma^{-1}\beta^{-1}\gamma 
\end{equation}
Solving for $\alpha=\delta^{2}$ and $\beta=\gamma\delta \gamma^{-1}$, one sees that the quotient group is generated by (the classes of ) $\gamma$ and $\delta$ with the relation: 
\[
\delta^2\gamma\delta\gamma^{-1}\delta^2\gamma.
\]
Denoting by $A$ the class of $\delta$ and by $B$ the class of $\delta^2\gamma$ the relation becomes $BAB^{-1}A^2B$ and one gets the presentation given in the statement after conjugation with $B$.

To prove that the two groups are not isomorphic, we show that $\pi_1(X_{0.1})$ admits a surjective isomorphism onto the group $A_4$, while  $\pi_1(X_{0.2})$ does not.

The homomorphism $\psi\colon \pi_1(X_{0.1})\to A_4$ is defined by  $A\mapsto (234)$ and $B\mapsto (123)$: it is easy to check that $\psi(A^{-1}B^{-1}A^2B^{2})=1$.

On the other hand, assume for contradiction that  $\psi\colon \pi_1(X_{0.2})\to A_4$ is a surjective homomorphism and set $a=\psi(A)$ and  $b=\psi(B)$; the equality:
\begin{equation}\label{eq: rel}
ab^{-1}a^2b^2=1,
\end{equation}
 holds in $A_4$. Note that since $\psi$ is surjective at least one betweeen $a$ and $b$ has order 3. If $a$ and $b$ both have order $3$, \eqref{eq: rel} gives $a^{-1}ba=b^{-1}$, and therefore the subgroup  $<b>$ is normal in $A_4$, a contradiction. 
 The remaining cases can be excluded by similar (easier) arguments. 

\refenum{ii} Let $X=X_{0.j}$ for $j= 1, 2$. We apply  the Mayer-Vietoris sequence from Corollary \ref{cor: mayer vietoris and fundamental group as amalgamated product} and get $\pi_*\colon H_4(\IP^2, \IZ)\isom H_4(X, \IZ)$ and  a long exact sequence
\[
 \begin{tikzcd}[ampersand replacement=\&]
{}\&\&0\rar\&H_3(X, \IZ)\rar\&{}\\
 {}\rar\&\underbrace{H_2(\bar D, \IZ)}_{=\IZ\langle [L_{1}], \dots, [L_{4}]\rangle} \rar{N}
 \& \underbrace{H_2(D, \IZ)}_{=\IZ\langle[ L_{12}],[ L_{34}]\rangle} \oplus\underbrace{H_2(\IP^2, \IZ)}_{=\IZ\cdot [L_i]}\rar \& H_2(X, \IZ)
\rar \& {}\\
{}\rar \&H_1(\bar D, \IZ) \rar{M_j} \& H_1(D, \IZ) \oplus\underbrace{H_1(\IP^2, \IZ)}_{=0}\rar \& H_1(X, \IZ)\rar \& 0,
 \end{tikzcd} 
\]
where 
\[N={\begin{pmatrix} 1& 1& 0& 0\\ 0&0&1&1\\1&1&1&1\end{pmatrix}}\]
in the given bases.

We computed the image of $\pi_*$ on fundamental groups in \eqref{eq: relation X_0.1} and \eqref{eq: relation X_0.2} and we get, by abelianisation, matrix representations 
\[
 M_1 = \begin{pmatrix} 2&0&1\\0&-1&1\\1&0&0\\0&1&-2\end{pmatrix} \text{ and } M_2 = \begin{pmatrix} -1&2&0\\0&1&-1\\0&1&0\\2&0&1\end{pmatrix}.
\]
In both cases the map is injective with cokernel isomorphic to $\IZ$. Thus the homology is as claimed.  
\end{proof}

\begin{prop}
\label {prop: algebraic invariants}
Let $X$ be a Gorenstein stable surface with $K^2=1$ and $\chi(X)=0$. Then $q(X) = 1$,   and the exact sequence for $\Pic(X)$ is:
\[0\to \Pic^0(X)\cong \IC^*\to \Pic(X)\to H^2(X,\IZ)\cong \IZ\to 0.\]
\end{prop}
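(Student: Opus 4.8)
The statement splits into two parts, and I would prove them in turn. For $q(X)=1$ I plan to apply the algorithm of Proposition \ref{prop: algorithm q} directly. By Corollary \ref{cor: 4lines} the normalisation is $\bar X=\IP^2$, which is connected with $q=0$, so $m=1$; moreover $\chi(X)=0$ together with the relation $\chi(X)=\mu-1$ from the proof of Proposition \ref{prop:table} forces $X$ to have a single degenerate cusp $p$ (so $k=1$), over which all six nodes of $\bar D$ lie. Since $\tau$ fixes no component of $\bar D^\nu=L_1\sqcup\cdots\sqcup L_4$, the space $H^0(\inverse\kl)$ of $\tau$-anti-invariant locally constant functions is spanned by $f_1=(1,-1,0,0)$ and $f_2=(0,0,1,-1)$, so $l=2$ and $M$ is a $1\times 2$ matrix. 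To evaluate it I would fix the cyclic labelling $\tau(s_i)=r_{i+1}$ of the six nodes over $p$, read off the pairs $(r_i,s_i)$ from the involution $(\phi_{12},\phi_{34})$, and compute $\phi_p(f_j)=\sum_{i}\bigl(f_j(r_i)-f_j(s_i)\bigr)$. The outcome is a nonzero row (for $X_{0.1}$ one finds $M=(-2,-2)$, and the analogous computation for $X_{0.2}$ is again of rank one), whence $\dim\ker M=1$ and $q(X)=\dim\ker M-m+1=1$.

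For the Picard sequence the plan is to run the exponential sequence $0\to\IZ_X\to\OO_X\xrightarrow{\exp}\OO_X^*\to 0$ on the complex space $X$ and feed in the invariants already available. From $\chi(X)=0$ and $q(X)=1$ I get $p_g(X)=h^2(\OO_X)=0$; from Proposition \ref{prop; topological invariants} together with the universal coefficient theorem I have $H^1(X,\IZ)\cong\IZ$ and $H^2(X,\IZ)\cong\IZ$. The associated long exact cohomology sequence then reads
\[
H^0(\OO_X)\xrightarrow{\exp}H^0(\OO_X^*)\xrightarrow{\delta}H^1(X,\IZ)\xrightarrow{\alpha}H^1(\OO_X)\to\Pic(X)\xrightarrow{c_1}H^2(X,\IZ)\to H^2(\OO_X)=0.
\]

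The crucial observation is that $H^0(\OO_X^*)=\IC^*$ is the \emph{full} image of $\exp\colon H^0(\OO_X)=\IC\to\IC^*$, so the connecting map $\delta$ vanishes and $\alpha$ is injective. Hence $\alpha\colon\IZ\hookrightarrow H^1(\OO_X)\cong\IC$ has image a rank-one lattice, and $\Pic^0(X)=\operatorname{coker}\alpha\cong\IC/\IZ\cong\IC^*$; since $H^2(\OO_X)=0$, the Chern class map $c_1$ surjects onto $H^2(X,\IZ)\cong\IZ$, which gives precisely the asserted sequence $0\to\IC^*\to\Pic(X)\to\IZ\to 0$.

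I expect the difficulty to lie in two places. The laborious point is the bookkeeping in $M$: one must pin down the cyclic ordering of the six nodes compatibly with $\tau$ and evaluate $\phi_p$, and it is essential that the resulting row be nonzero, so that $\dim\ker M=1$ rather than $2$. The conceptual subtlety is distinguishing $\Pic^0(X)\cong\IC^*$ from the a priori possibility $\Pic^0(X)\cong\IC$ (both are one-dimensional once $q=1$ is known); this is exactly what the vanishing of $\delta$ settles, since the surjectivity of $\exp$ on global units forces $\alpha$ to be injective and therefore its image to be a genuine lattice rather than zero.
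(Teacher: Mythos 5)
Your proposal is correct and follows essentially the same route as the paper: the irregularity is computed via the matrix $M$ of Proposition~\ref{prop: algorithm q} (with the same basis of $\tau$-anti-invariant functions and the same value $M=(-2,-2)$ for $X_{0.1}$), and the Picard sequence is extracted from the exponential sequence using $p_g(X)=0$, $h^1(\OO_X)=1$ and $H^1(X,\IZ)\cong H^2(X,\IZ)\cong\IZ$ from Proposition~\ref{prop; topological invariants}. The only differences are cosmetic: you pin down the single degenerate cusp via $\chi(X)=\mu-1$ instead of citing Proposition~\ref{prop: M_{0,1}}, and you spell out the lattice argument $\IC/\IZ\cong\IC^*$ that the paper leaves implicit.
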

\begin{proof}
By Proposition \ref{prop: M_{0,1}}, $X$ is one of  the surfaces $X_{0.1}$ or  $X_{0.2}$ of  Table \ref{tab: 4lines}. 
In both cases, to compute the irregularity $q$  we follow the method of Proposition \ref{prop: algorithm q}. Let us first consider $X_{0.1}$. We need to establish notation as in Section \ref{section: irregularity} for the points of  $\bar D^\nu$ that map  to the unique degenerate cusp $P\in X_{0.1}$. Let
\begin{align*}
 s_1=P_{12},&& r_2=\tau(s_1) = P_{23},\\
 s_2 = P_{32},&& r_3=\tau(s_2) = P_{43},\\
 s_3 = P_{34},&& r_4=\tau(s_3) = P_{42},\\
 s_4 = P_{24},&& r_5=\tau(s_4) = P_{14},\\
 s_5 = P_{41},&& r_6=\tau(s_5) = P_{31},\\
 s_6 = P_{13},&& r_1=\tau(s_6) = P_{21},\\
\end{align*}
and define a basis $f_1, f_2$ of $\tau$-anti-invariant functions on $\bar D^\nu$ by
\begin{gather*}
 f_1(P_{1\ast}) = 1 = -f_1(P_{2\ast}), \quad f_1(P_{3\ast}) = f_1(P_{4\ast})=0,\\
 f_2(P_{3\ast}) = 1 = -f_2(P_{4\ast}), \quad  f_2(P_{2\ast}) = f_2(P_{2\ast})=0.
\end{gather*}
Then the matrix $M$ from Proposition \ref{prop: algorithm q} is $M = (-2, -2)$ and thus $q(X)=1$ and $p_g(X)=\chi(X)-1+q(X)=0$.

We skip the analogous calculation for the second case.

 Since $h^2(\OO_X)=p_g(X)=0$ for  both $X=X_{0.1}$ and $X=X_{0.2}$,  the exponential sequence gives:
\[
 0\to H^1(X, \IZ)\to H^1(X, \ko_X)\to H^1(X, \ko_X^*) \to H^2(X,\IZ)\to 0, 
 \]
 The statement now follows since $h^1(\OO_X)=1$ and  $H^i(X, \IZ) \cong \IZ$  for $i=1,2$  by Proposition \ref{prop; topological invariants}.
\end{proof}
\begin{rem}
 One could also deduce $q(X)=1$ in a less elementary way: an slc surface $X$ is semi-normal and thus by \cite[Thm.~4.1.7]{alexeev02} its $\Pic^0(X)$ has unipotent rank zero. In other words, it is an extension of multiplicative groups and an abelian variety and thus by the exponential sequence $q(X)\le b_1(X)$. For the two surfaces at hand we know $q(X)>0$ and $b_1(X)=1$, thus $q(X)=1$.
\end{rem}

\begin{landscape}
\begin{table}[htb!]\caption{Surfaces from four lines in the plane\\ (Notation as in Section \ref{section: 4 lines})}\label{tab: 4lines}

\begin{center}
 \begin{tabular}{c cc ccc cc}
 \toprule
surface& $\chi(\ko_X)$ &$\phi_{12}$ & $\phi_{34}$& degenerate cusps   & $q(X) $& $\Aut(X)$ \\
\midrule
$X_{3.1}$&
3&
$\begin{pmatrix}
 P_{12} & P_{13} & P_{14}\\
 P_{21} & P_{24} &P_{23}
\end{pmatrix}$
&
$\begin{pmatrix}
 P_{31} & P_{32} & P_{34}\\
 P_{42} & P_{41} &P_{43}
\end{pmatrix}$
&
$\{P_{(12)}\},\{P_{(34)}\}, \{P_{(13)}, P_{(24)}\}, \{P_{(23)}, P_{(14)}\}$
&
0&
$D_4$

\\
\midrule
$X_{2.1}$&
2&
$\begin{pmatrix}
 P_{12} & P_{13} & P_{14}\\
 P_{21} & P_{23} &P_{24}
\end{pmatrix}$
&
$\begin{pmatrix}
 P_{31} & P_{32} & P_{34}\\
 P_{41} & P_{42} &P_{43}
\end{pmatrix}$
&
$\{P_{(12)}\},\{P_{(34)}\}, \{P_{(13)}, P_{(14)}, P_{(23)}, P_{(24)}\}$
& 0&
$D_4$

\\
$X_{2.2}$&
2&
$\begin{pmatrix}
 P_{12} & P_{13} & P_{14}\\
 P_{21} & P_{23} &P_{24}
\end{pmatrix}$
&
$\begin{pmatrix}
 P_{31} & P_{32} & P_{34}\\
 P_{42} & P_{41} &P_{43}
\end{pmatrix}$
&
$\{P_{(12)}\},\{P_{(34)}\}, \{P_{(13)}, P_{(14)}, P_{(23)}, P_{(24)}\}$
&0&
$\langle (12), (34)\rangle$

\\
$X_{2.3}$&
2&
$\begin{pmatrix}
 P_{12} & P_{13} & P_{14}\\
 P_{23} & P_{24} &P_{21}
\end{pmatrix}$
&
$\begin{pmatrix}
 P_{31} & P_{32} & P_{34}\\
 P_{42} & P_{41} &P_{43}
\end{pmatrix}$
&
$\{P_{(12)}, P_{(23)}, P_{(14)}\}, \{P_{(13)}, P_{(24)}\}, \{P_{(34)}\}$
&0&
$\langle(12)(34)\rangle$

\\
\midrule
$X_{1.1}$&
1&
$\begin{pmatrix}
 P_{12} & P_{13} & P_{14}\\
 P_{21} & P_{23} &P_{24}
\end{pmatrix}$
&
$\begin{pmatrix}
 P_{31} & P_{32} & P_{34}\\
 P_{41} & P_{43} &P_{42}
\end{pmatrix}$
&
$\{P_{(12)}\},\{P_{(34)}, P_{(13)}, P_{(14)}, P_{(23)}, P_{(24)}\}$
&0&
$\langle(34)\rangle$
\\
$X_{1.2}$
&
1
&
$\begin{pmatrix}
 P_{12} & P_{13} & P_{14}\\
 P_{21} & P_{23} &P_{24}
\end{pmatrix}$
&
$\begin{pmatrix}
 P_{31} & P_{32} & P_{34}\\
 P_{42} & P_{43} &P_{41}
\end{pmatrix}$
&
$\{P_{(12)}\},\{P_{(34)}, P_{(13)}, P_{(14)}, P_{(23)}, P_{(24)}\}$
&0&
$\langle(12)(34)\rangle$

\\
$X_{1.3}$&
1&
$\begin{pmatrix}
 P_{12} & P_{13} & P_{14}\\
 P_{21} & P_{24} &P_{23}
\end{pmatrix}$
&
$\begin{pmatrix}
 P_{31} & P_{32} & P_{34}\\
 P_{41} & P_{43} &P_{42}
\end{pmatrix}$
&
$\{P_{(12)}\},\{P_{(34)}, P_{(13)}, P_{(14)}, P_{(23)}, P_{(24)}\}$
&0&
$\langle(12)(34)\rangle$

\\
$X_{1.4}$&
1&
$\begin{pmatrix}
 P_{12} & P_{13} & P_{14}\\
 P_{23} & P_{24} &P_{21}
\end{pmatrix}$
&
$\begin{pmatrix}
 P_{31} & P_{32} & P_{34}\\
 P_{42} & P_{43} &P_{41}
\end{pmatrix}$
&
$\{P_{(12)}, P_{(34)},  P_{(14)}, P_{(23)}\},\{P_{(13)}, P_{(24)}\}$
&0&
$\langle(13)(24),(14)(23)\rangle$

\\
$X_{1.5}$&
1&
$\begin{pmatrix}
 P_{12} & P_{13} & P_{14}\\
 P_{23} & P_{24} &P_{21}
\end{pmatrix}$
&
$\begin{pmatrix}
 P_{31} & P_{32} & P_{34}\\
 P_{43} & P_{41} &P_{42}
\end{pmatrix}$
&
$\{P_{(12)}, P_{(23)}, P_{(14)}\}, \{P_{(13)}, P_{(24)}, P_{(34)}\}$
&0&
$\langle(12)(13)(24)\rangle$

\\
\midrule
$X_{0.1}$&
0
&
$\begin{pmatrix}
 P_{12} & P_{13} & P_{14}\\
 P_{23} & P_{21} &P_{24}
\end{pmatrix}$
&
$\begin{pmatrix}
 P_{31} & P_{32} & P_{34}\\
 P_{41} & P_{43} &P_{42}
\end{pmatrix}$
&
$\{P_{(12)}, P_{(34)}, P_{(13)}, P_{(14)}, P_{(23)}, P_{(24)}\}$
&
1&
$\langle(14)(23)\rangle$
\\
$X_{0.2}$&
0&
$\begin{pmatrix}
 P_{12} & P_{13} & P_{14}\\
 P_{23} & P_{24} &P_{21}
\end{pmatrix}$
&
$\begin{pmatrix}
 P_{31} & P_{32} & P_{34}\\
 P_{41} & P_{43} &P_{42}
\end{pmatrix}$
&
$\{P_{(12)}, P_{(34)}, P_{(13)}, P_{(14)}, P_{(23)}, P_{(24)}\}$
&1
&
$\{0\}$
\\

\bottomrule
\end{tabular}
\end{center}
\end{table}
\end{landscape}

%
%

\end{document}